\definecolor{green}{rgb}{1,0.5,0} 
\newcommand{\nrm}[1]{\Vert#1\Vert}
\newcommand{\brk}[1]{\langle#1\rangle}
\newcommand{\set}[1]{\{#1\}}
\newcommand{\aleq}{\lesssim}
\newcommand{\lap}{\Dlt}
\newcommand{\ud}{\mathrm{d}}
\newcommand{\rd}{\partial}
\newcommand{\nb}{\nabla}
\newcommand{\imp}{\Rightarrow}
\newcommand{\bb}{\Big}
\newcommand{\gmm}{\gamma}
\newcommand{\dlt}{\delta}
\newcommand{\Dlt}{\Delta}
\newcommand{\eps}{\epsilon}
\newcommand{\lmb}{\lambda}
\newcommand{\Lmb}{\Lambda}
\newcommand{\omg}{\omega}
\newcommand{\bbH}{\mathbb H}
\newcommand{\bbR}{\mathbb R}
\newcommand{\calB}{\mathcal B}
\newcommand{\calC}{\mathcal C}
\newcommand{\calE}{\mathcal E}
\newcommand{\calH}{\mathcal H}
\newcommand{\calN}{\mathcal N}
\newcommand{\calS}{\mathcal S}
\newcommand{\calT}{\mathcal T}
\newcommand{\arctanh}{\mathrm{arctanh} \, }
\definecolor{light-gray1}{gray}{0.90}
\definecolor{light-gray2}{gray}{0.80}
\definecolor{deepgreen}{cmyk}{1,0,1,0.5}
\newcommand{\E}{\mathcal{E}}
\newcommand{\HH}{\mathcal{H}}
\newcommand{\NN}{\mathcal{N}}
\newcommand{\CC}{\mathscr{C}}
\newcommand{\Hp}{\mathbb{H}}
\newcommand{\R}{\mathbb{R}}
\newcommand{\Sp}{\mathbb{S}}
\newcommand{\h}{\mathbf{h}}
\newcommand{\m}{\mathbf{m}}
\newcommand{\al}{\alpha}
\newcommand{\be}{\beta}
\newcommand{\e}{\varepsilon}
\newcommand{\fy}{\varphi}
\newcommand{\om}{\omega}
\newcommand{\la}{\lambda}
\newcommand{\De}{\Delta}
\newcommand{\Ga}{\Gamma}
\newcommand{\loc}{\operatorname{loc}}
\newcommand{\Rmnum}[1]{\expandafter\@slowromancap\romannumeral #1@}
\newcommand{\I}{\infty}
\newcommand{\abs}[1]{\left\lvert{#1}\right\rvert}
\newcommand{\ali}[1]{\begin{align}\begin{split} #1 \end{split}\end{align}}
\newcommand{\ant}[1]{\begin{align*}\begin{split} #1 \end{split}\end{align*}}
\newcommand{\EQ}[1]{\begin{equation}\begin{split} #1 \end{split}\end{equation}}
\newcommand{\Del}[1]{}
\numberwithin{equation}{section}
\newtheorem{thm}{Theorem}[section]
\newtheorem{lem}[thm]{Lemma}
\newtheorem{prop}[thm]{Proposition}
\newtheorem{conj}{Conjecture}
\theoremstyle{remark}
\newtheorem{rem}{Remark}
\newtheorem{defn}{Definition}
\newcommand{\mand}{{\ \ \text{and} \ \  }}
\newcommand{\mas}{{\ \ \text{as} \ \ }}
\newcommand{\euc}{\mathrm{euc}}
\newcommand{\hyp}{\mathrm{hyp}}
\newcommand{\dvol}{\mathrm{dVol}}
\def\glei{\mathrm{eq}}
\newcommand{\nl}{\mathrm{nl}}
\begin{document}

\title[Equivariant wave maps on the hyperbolic plane with large energy]{Equivariant wave maps on the hyperbolic plane with large energy}

\author{Andrew Lawrie}
\author{Sung-Jin Oh}
\author{Sohrab Shahshahani}

\begin{abstract} 
In this paper we continue the analysis of equivariant wave maps from $2$-dimensional hyperbolic space $\Hp^2$ into surfaces of revolution $\NN$ that was initiated in~\cite{LOS1, LOS4}.  When the target  $\NN=\Hp^2$  we proved in~\cite{LOS1} the existence and asymptotic stability of a $1$-parameter family of finite energy harmonic maps indexed by how far each map wraps around the target. Here we conjecture that each of these harmonic maps is globally asymptotically stable, meaning that the evolution of any arbitrarily large finite energy perturbation of a harmonic map asymptotically resolves into the harmonic map itself plus free radiation. Since such initial data exhaust the energy space, this is the~\emph{soliton resolution conjecture} for this equation. The main result is a verification of this conjecture for a nonperturbative subset of the harmonic maps.  


\end{abstract}

\thanks{Support of the National Science Foundation, DMS-1302782 and NSF 1045119 for the first and third authors, respectively, is gratefully acknowledged. The second author is a Miller Research Fellow, and acknowledges support from the Miller Institute.}
\maketitle
\section{Introduction}

In this article we continue the study initiated in~\cite{LOS1, LOS4} of equivariant wave maps  $U: \R \times  \Hp^2 \to \NN$, where $\Hp^2$ is the $2$-dimensional  hyperbolic space and the  target manifold   $\NN$ is a surface of revolution. 

Let $\h$ denote the metric  on hyperbolic space $\Hp^2$ and let $\eta = \textrm{diag}(-1, \h)$ denote the Lorentzian metric on $\R \times \Hp^2$.  Under the usual equivariant reduction and with polar coordinates $(t, r, \om)$ on $\R \times \Hp^2$,  a wave map takes the form $U(t, r, \om) = (\psi(t, r), \om)$ where $(\psi, \om)$ are polar coordinates on the target surface $\NN$.  The wave map system
\EQ{ \label{Ueq} 
U^{a}_{tt} - \De_{\Hp^2} U^{a}  =  \eta^{\al \be} \Ga^{a}_{bc}(U) \partial_{\al} U^b \partial_\be U^c
}   
 reduces to a semilinear  equation for $\psi$  given by 
\EQ{\label{eq:wm} 
&\psi_{tt}  -  \psi_{rr} -  \coth r \psi_r + \frac{g(\psi)g'(\psi)}{ \sinh^2 r} = 0  \\
}
where the function $g$ determines the metric on $\NN$,  $ds^2 =  d \psi^2 + g^2(\psi) d \om^2$. 

The papers~\cite{LOS1, LOS4} addressed the existence and stability properties of the many finite energy stationary solutions of~\eqref{eq:wm} in the model cases  $\NN= \Sp^2$  and $\NN=\Hp^2$. When the target $\NN= \Hp^2$ it was shown in~\cite{LOS1} that there exists a $1$-parameter family of \emph{asymptotically stable} finite energy harmonic maps $P_\la$ for $\la \in [0, 1)$ with energies ranging from $0$ to $\infty$. 
 Here we focus on understanding the asymptotic dynamics of solutions to~\eqref{eq:wm} with large energy when the target is $\Hp^2$.  The space of finite energy initial data for~\eqref{eq:wm} naturally splits into disjoint classes, $\E_\la$, that are fixed by the evolution and  with a unique harmonic map $P_\la$ minimizing the energy in each class. We prove the \emph{soliton resolution conjecture} for this model for a range of $\la \in [0, 1)$.  Namely we show for a (nonperturbative) range of $\la$ that all initial data in $\E_\la$ lead to a globally regular solution that  asymptotically decouples into the unique harmonic map $P_\la$ in its energy class plus free radiation.   See Theorem~\ref{thm:main} below for a precise statement.  

A starting point in the analysis is the following simple observation: singularity formation for~\eqref{eq:wm} is a local phenomenon and thus the global geometry of the domain $\Hp^2$ does not play a role in determining the blow-up dynamics. In particular, a solution blows up in finite time by concentrating energy at the tip of a light cone centered at $r =0$.  Such a concentrating solution approximately solves the corresponding  scale invariant wave map equation on Euclidean space near the origin. In fact, it is easy to deduce the analogue of  Struwe's famous bubbling result in this context: if a wave map $U:[0, T) \times \Hp^2 \to \NN$ as above blows up at time $T< \infty$,  one can produce a sequence of maps $U_n: [0, 1) \times \R^2 \to \NN$ (obtained by translating and rescaling $U$) so that $U_n$ converges strongly in $H^1_{\loc}( [0, 1) \times \R^2 ; \NN)$ to a nontrivial finite energy  Euclidean harmonic map $Q: \R^2 \to \NN$; see Section~\ref{s:Struwe} for a sketch of this argument.   This means that if there are no finite energy nontrivial Euclidean harmonic maps $Q: \R^2 \to \NN$, then all hyperbolic wave maps $U: I \times \Hp^2 \to \NN$ are defined globally in time, i.e., $I = \R$. 
This global regularity result holds for $\NN = \Hp^2$, and thus the remaining question for this target is to describe the asymptotic behavior of a solution with arbitrary finite energy. 


To  formulate our results in this direction we begin with a more detailed description of the model. First, as we noted above, let  $(r, \om)$ be polar coordinates on the domain $\Hp^2$ viewed as a hyperboloid in $2+1$-dimensional Minkowski space, i.e., we define a coordinate map $F$ by 
\begin{align*}
F:  [0, \infty) \times \Sp^1 \ni (r, \om) \mapsto (\sinh r \sin \om, \sinh r\cos \om, \cosh r) \in (\R^{2+1}, \m).
\end{align*}
where $\m$ is the Minkowski metric on $\R^{2+1}$. The  hyperbolic metric $\h$ on $\Hp^2$ in these coordinates is the pullback of the Minkowski metric by  $F$, that is  $\h = F^*\mathbf{m}$ and $ \h =  \textrm{diag}(1, \sinh^2 r)$. 
The volume element is $\sqrt{|\h(r, \om)|} \, dr d \omg = \sinh r \, dr d \omg$. For a function $f : \Hp^2 \to \R$, we have 
\begin{align*}
\int_{\Hp^2} f(x) \,  \dvol_{\h} = \int_0^{2\pi} \int_0^\I f \circ F(r ,\om) \sinh r \, dr \, d  \om.
\end{align*}
For radial functions, $f: \Hp^2 \to \R$ we abuse notation and write $f(x) = f(r)$ and we will omit the multiple of $2\pi$ obtained in the integration above.

We endow the  target surface $\NN = \Hp^2$  with polar coordinates $(\psi, \om)$.  The function~$g(\psi)$ in~\eqref{eq:wm} is  $g(\psi) = \sinh \psi$ and thus the Cauchy problem becomes 
\EQ{\label{wm} 
&\psi_{tt}  -  \psi_{rr} -  \coth r \psi_r + \frac{\sinh 2 \psi}{ 2\sinh^2 r} = 0  \\
 &\vec \psi(0) = (\psi_0, \psi_1)  \\
}
 We will often use the notation $\vec \psi(t)  := ( \psi(t), \psi_t(t))$. The conserved energy reads 
 \EQ{ \label{en} 
 \E(  \vec \psi(t))  :=  \frac{1}{2} \int_0^\infty\left(  \psi_t^2 + \psi_r^2 + \frac{ \sinh^2 \psi}{\sinh^2 r}  \right) \,  \sinh r \,  \, dr = \textrm{constant}. 
 }
 From~\eqref{en} it is clear that any finite energy data must satisfy $\psi_0(0)  = 0$. Moreover the limit $\lim_{r \to \infty} \psi_0(r)$ exists and can take any finite value -- this latter point is in stark contrast to the corresponding problem on Euclidean space where $\psi_0(r)$ must vanish at $r  = \infty$. The endpoint $ \lim_{r \to \infty} \psi_0(r)$ divides the space of finite energy data into disjoint classes, which we parameterize by $\la  \in [0, 1)$ as follows
 \EQ{
 \E_{\la}:= \{ (\psi_0, \psi_1) \mid \E( \psi_0, \psi_1)< \infty \mand  \lim_{r \to \infty} \psi_0(r) = 2 \arctanh{\la} \}.
   }
In~\cite{LOS1} we showed that for each $\la \in [0, 1)$ there is a unique harmonic map $P_{\la}(r)$ given by 
\EQ{
P_\la(r):= 2 \arctanh( \la \tanh(r/2))
}
Moreover, $(P_\la, 0)$  has minimal energy in $\E_\la$ with 
\EQ{
\E(P_\la, 0) =  \frac{ 2\la^2}{ 1- \la^2}
}
Note that $\E(P_\la, 0) \to 0$ as $\la \to 0^+$ and $\E(P_\la,  0) \to \infty$ as $\la \to 1^-$. 

 Given that $(1)$ all solutions to~\eqref{wm} are globally regular -- otherwise one could find a nontrivial Euclidean harmonic map $Q: \R^2 \to \Hp^2$, of which there are none;  and $(2)$ the energy classes $\E_\la$ are fixed by the evolution, we formulate the  following conjecture about the asymptotic behavior of solutions to~\eqref{wm}. 
\begin{conj}[Soliton resolution for equivariant wave maps $\R \times \bbH^{2} \to \bbH^{2}$] \label{c:sr} 
Consider the Cauchy problem~\eqref{wm} with finite energy initial data $(\psi_{0}, \psi_{1})$. Let $$\lmb := \tanh \frac{\psi_{0}(\infty)}{2} \in [0, 1).$$ Then~\eqref{wm} is globally well-posed, and the solution scatters to $P_{\lmb}$ as $t \to \pm \infty$.
\end{conj}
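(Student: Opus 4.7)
The plan is to prove the conjecture via a concentration-compactness and rigidity program in the spirit of Kenig--Merle and Duyckaerts--Kenig--Merle, carried out in the presence of the continuous family of harmonic maps $P_\lambda$ rather than around a vacuum state.

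\textbf{Step 1 (Linearized theory).} Writing $\psi = P_\lambda + \varphi$ and using that $P_\lambda$ is stationary, the equation becomes
\[
\varphi_{tt} + \LL_\lambda \varphi = N_\lambda(\varphi), \qquad \LL_\lambda \varphi := -\varphi_{rr} - \coth r\, \varphi_r + \frac{\cosh 2P_\lambda}{\sinh^2 r}\varphi,
\]
with $N_\lambda(\varphi)$ at least quadratic in $\varphi$. The companion paper \cite{LOS1} already supplies coercivity of $\LL_\lambda$ on the natural equivariant energy space via the asymptotic stability result. What is needed now is a sharper linear theory for $\partial_t^2 + \LL_\lambda$: absence of point spectrum, ruling out embedded eigenvalues and threshold resonances, and quantitative Strichartz or local-energy-decay estimates. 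These provide a small-data scattering baseline in each class $\E_\lambda$.

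\textbf{Step 2 (Global existence and profile decomposition).} Struwe's bubbling argument, sketched in Section~\ref{s:Struwe}, combined with the nonexistence of nontrivial finite-energy Euclidean harmonic maps $\R^2 \to \Hp^2$, gives global regularity for every $\vec\psi(0) \in \E_\lambda$. One then constructs a profile decomposition adapted to the hyperbolic setting and compatible with the boundary condition $\psi_0(\infty) = 2\arctanh\lambda$. The absence of a scaling symmetry in $\Hp^2$ simplifies matters considerably: the only compactness-breaking parameter is time translation, and any concentrating profile would have to live in a Euclidean rescaling limit and therefore produce a Euclidean harmonic map into $\Hp^2$ --- which cannot exist.

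\textbf{Step 3 (Critical element and rigidity).} If the conjecture fails in $\E_\lambda$, the profile decomposition delivers a minimal nonscattering critical element $\vec\psi_\ast$ whose trajectory $\{\vec\psi_\ast(t) - (P_\lambda, 0) : t \in \R\}$ is precompact in the energy space. To rule this out one combines two tools: (i) an exterior-cone channels-of-energy estimate for the linearized flow, forcing any nontrivial radiation component to leak to $r \to \infty$ in contradiction with precompactness, and (ii) a virial/monotonicity identity exploiting the exponential volume growth of $\Hp^2$ together with the coercivity of $\LL_\lambda$ to preclude stationary-in-time critical elements other than $P_\lambda$ itself. The conclusion is $\vec\psi_\ast \equiv (P_\lambda, 0)$, contradicting minimality.

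\textbf{Main obstacle.} The decisive difficulty, and almost certainly the reason the present paper only reaches a subrange of $\lambda$, is the spectral analysis of $\LL_\lambda$ as $\lambda \uparrow 1^-$. In this regime $P_\lambda$ deposits energy out to spatial infinity and the potential $\cosh 2P_\lambda / \sinh^2 r$ degenerates delicately; one must rule out threshold resonances, embedded eigenvalues, and loss of uniform coercivity, any of which would destroy the linear dispersive estimates underpinning Step~1. A related challenge is handling profiles that concentrate at large $r$, where $\Hp^2$ flattens to Euclidean geometry: turning the nonexistence of finite-energy Euclidean harmonic maps into $\Hp^2$ into a quantitative rigidity statement, uniformly in $\lambda \in [0,1)$, is the point where the present method most plausibly breaks down and where new ideas would be required to upgrade the main theorem to the full conjecture.
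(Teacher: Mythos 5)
You are attempting to prove a statement that the paper itself records only as a \emph{conjecture}: the paper establishes it solely for $\lmb \leq \Lmb$ with $\Lmb \geq 0.57716$ (Theorem~\ref{thm:main}), and your write-up is likewise a program rather than a proof and does not close that gap. Within the partial range your overall architecture (Kenig--Merle concentration compactness plus rigidity) does match the paper's, but two of your specific claims are wrong. First, in Step~2 you assert that ``the only compactness-breaking parameter is time translation.'' This is false: the paper's profile decomposition (Proposition~\ref{prop:lin-prof}) necessarily includes \emph{Euclidean} profiles with frequency parameters tending to infinity, because the noncompact scaling group still destroys compactness of bounded sequences of linear waves even though it is not a symmetry of the equation. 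These profiles are not disposed of by the nonexistence of finite-energy Euclidean harmonic maps into $\Hp^2$; they are handled by invoking the Euclidean scattering theorem of C\^ote--Kenig--Lawrie--Schlag (Theorem~\ref{thm:euc-scat}) together with a nonlinear Euclidean approximation lemma (Proposition~\ref{prop:euc-evol}).

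Second, and more importantly, you misdiagnose the obstruction to reaching $\lmb = 1$. It is not a spectral issue for $\LL_\lmb$: no threshold resonances or loss of Strichartz estimates are at stake, and the linear theory from \cite{LOS1} already underlies the asymptotic stability of every $P_\lmb$, $\lmb \in [0,1)$. The restriction enters exclusively in the rigidity step. The paper's Morawetz estimate (Proposition~\ref{prop:morawetz}) hinges on the pointwise sign condition \eqref{eq:mwtz:H2:key}, equivalently \eqref{eq:final-ineq}, which genuinely fails for $\lmb > 0.57716\cdots$ (for instance at $(s,\lmb)=(1/2,3/4)$; see Remark~\ref{rem:Lmb}); the problematic terms arise from the interaction of the multiplier with the potential generated by $P_\lmb$ in the regime $\varphi < 0$, where the perturbation unwinds the harmonic map. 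Your proposed alternative rigidity tools --- exterior channels of energy for the linearized flow and a virial identity --- are not established in this setting; the virial computation would face exactly the same indeterminate-sign terms, and the channels-of-energy route would require new exterior estimates for the shifted wave equation with potential on $\Hp^4$ that no one has proved. So the genuine gap is concentrated in one place: a rigidity argument valid for all $\lmb \in [0,1)$, which neither your proposal nor the paper supplies.
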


\begin{rem}
The phrase ``$\vec \psi(t)$ scatters to $P_\la$ as $t \to \pm \infty$" is defined as  follows: given a solution $\vec \psi(t)$ to~\eqref{wm} with initial data $\vec \psi(0) \in \E_\la$ we say that $\vec \psi(t)$ scatters to $P_\la$ as $t \to \pm \infty$ if there exists a solution $\fy_L^\pm$ to the linear wave equation 
\EQ{
\fy_{tt} - \fy_{rr} - \coth r \fy_r + \frac{1}{\sinh^2 r} \fy  = 0
}
so that 
\EQ{ \label{eq:2d-scat}
\|  \vec \psi(t) - (P_\la, 0) - \vec \fy^{\pm}_L(t) \|_{\HH_0} \to 0 \mas t \to \pm \infty
}
where the energy norm $\| (\cdot, \cdot)\|_{\HH_0}$ is defined as  
\ant{
\|(\phi_0,\phi_1)\|^2_{\HH_0}:=\int_0^\infty\left((\partial_r\phi_0)^2+\frac{\phi_0^2}{\sinh^2r}+\phi_1^2\right)\sinh r dr.
}

\end{rem} 

Using the celebrated concentration compactness/rigidity approach of Kenig-Merle~\cite{KM06, KM08} we are able to make partial progress on this conjecture. 
\begin{thm} \label{thm:main}
Conjecture~\ref{c:sr} holds for all initial data $(\psi_0, \psi_1) \in \E_\la$ where the endpoint $\la$ satisfies   $0 \leq \lmb \leq \Lmb$, where $\Lmb \geq 0.57716$.
\end{thm}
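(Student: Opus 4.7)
The approach is the Kenig--Merle concentration-compactness / rigidity scheme. A key structural simplification is that $\Hp^2$ carries no dilation symmetry and $P_\la$ has no continuous modulation parameter; hence the hypothetical critical element has a trajectory pre-compact in the energy space itself, and essentially the entire burden of the argument falls on the rigidity step.

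\emph{Linear theory.} Write $\psi = P_\la + \fy$ with $\vec\fy(t) \in \HH_0$, so that
\EQ{
\fy_{tt} + L_\la \fy = N_\la(\fy), \qquad L_\la := -\partial_r^2 - \coth r\,\partial_r + \frac{\cosh 2P_\la}{\sinh^2 r},
}
with $N_\la(\fy)$ formally quadratic in $\fy$, and the nonlinear energy expands as $\E(\vec\psi) - \E(P_\la,0) = \tfrac12\langle L_\la\fy,\fy\rangle + \tfrac12\|\fy_t\|^2 + O(\fy^3)$. Building on the spectral analysis of~\cite{LOS1}, which already yields asymptotic stability of $P_\la$ for small perturbations, $L_\la$ is non-negative self-adjoint on $L^2(\sinh r\,dr)$ with spectral gap $\gapE > 0$, so the quadratic form $\langle L_\la \cdot,\cdot\rangle$ is equivalent to the natural $\HH_0$ norm. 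Using a distorted Fourier representation for $L_\la$ together with the exponential volume growth of $\Hp^2$, I would upgrade this to global-in-time Strichartz estimates for $e^{\pm it\sqrt{L_\la}}$, which provide small-data global well-posedness and scattering to $P_\la$. Combined with the Struwe-type bubbling of Section~\ref{s:Struwe} and the absence of nontrivial finite-energy harmonic maps $\R^2 \to \Hp^2$, one also obtains unconditional global regularity.

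\emph{Concentration-compactness.} Let $\A_\la \subset \E_\la$ denote the set of data scattering to $P_\la$; it is open and contains $(P_\la,0)$. Assuming $\A_\la \neq \E_\la$, let $E_{c,\la}$ be the minimal non-scattering energy. A Bahouri--G{\'e}rard profile decomposition for bounded sequences in $\HH_0$, incorporating Euclidean concentration profiles centered at finite points of $\Hp^2$ (where the geometry is locally Euclidean) together with profiles of bounded scale for the $L_\la$-flow, followed by a nonlinear perturbation lemma, then produces a critical element $\vec\psi_*(t) = (P_\la + \fy_*(t),\fy_{*,t}(t))$ of energy exactly $E_{c,\la}$ whose perturbation trajectory $\{\vec\fy_*(t)\}_{t\in\R}$ is pre-compact in $\HH_0$. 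Euclidean bubbles cannot persist at the critical level because they would produce a nontrivial finite-energy harmonic map $\R^2 \to \Hp^2$, contradicting the Eells--Wood obstruction.

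\emph{Rigidity --- the main obstacle.} The decisive step is to show $\vec\fy_* \equiv 0$. I would test the perturbation equation against a hyperbolic virial-type multiplier of the form $\chi(r/R)\bigl(\partial_r + \alpha_\la(r)\bigr)\fy$, with weight $\alpha_\la$ and cut-off scale tuned to $P_\la$. Pre-compactness of the trajectory forces the boundary error terms into $o_R(1)$ uniformly in $t$, and time-averaging would yield an identity of the form
\EQ{
\frac{1}{T}\int_0^T\!\!\int_0^\infty Q_\la\bigl(\fy_*,\partial_r\fy_*,\partial_t\fy_*\bigr)\,\sinh r\,dr\,dt \longrightarrow 0 \mas T\to\infty,
}
for a non-negative quadratic form $Q_\la$. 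Coercivity of $Q_\la$ on $\HH_0$ would then force $\vec\fy_* \equiv 0$, contradicting non-scattering. The numerical threshold $\Lmb \geq 0.57716$ is expected to arise precisely as the supremum of $\la$ for which the coercivity of $Q_\la$ can be verified via the explicit formula $P_\la(r) = 2\arctanh(\la\tanh(r/2))$, after optimizing over admissible choices of $\alpha_\la$. The principal difficulty lies in this coercivity check: as $\la \to 1^-$ the potential $\cosh 2P_\la$ becomes large on bounded regions and the hyperbolic drift $\coth r\,\partial_r$ produces indefinite cross terms, so the weight $\alpha_\la$ must be tuned with considerable care to both preserve the positivity inherited from $L_\la$ and absorb these cross terms. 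Sharpening the threshold above $0.57716$ would require either an improved multiplier or an extra coercivity input exploiting the size of $\gapE$.
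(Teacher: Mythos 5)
Your overall architecture agrees with the paper's: reduction to a perturbation of $P_\la$, Strichartz-based local theory, a Bahouri--G\'erard decomposition with both hyperbolic and Euclidean (concentrating) profiles, extraction of a critical element with pre-compact trajectory, and a rigidity step via a virial/Morawetz multiplier adapted to $P_\la$, with the numerical threshold emerging from a positivity check involving the explicit formula for $P_\la$. (Two secondary differences: the paper works throughout with the $4d$ variable $u=\sinh^{-1} r\,\fy$ on $\Hp^4$ rather than with $\sqrt{L_\la}$ on $\Hp^2$; and Euclidean profiles are removed not by invoking the nonexistence of harmonic maps at the critical level, but because the Euclidean limiting equation \eqref{4d eq-euc} is known to scatter for \emph{all} energies by~\cite{CKLS1}, so a Euclidean profile can never be the nonscattering one.)

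The genuine gap is in your rigidity step, and it is twofold. First, the multiplier identity does not produce a coercive non-negative quadratic form $Q_\la$ on $\HH_0$, and no choice of weight will make it do so: with the paper's multiplier $a_r\fy_r+\tfrac12\fy$, $a_r=\tanh(r/2)$, the indefinite term $\int(\fy_t^2-\fy_r^2)\sinh r\,dr$ cancels exactly, and what survives at the quadratic level is only $\int a_{rr}\fy_r^2\sinh r\,dr$ with $a_{rr}=\tfrac12\operatorname{sech}^2(r/2)\to 0$ exponentially --- a term with a good sign but no coercivity, which the paper simply discards. The usable output of the identity \eqref{eq:mwtz:general} is instead a lower bound on the \emph{nonlinear} functional $I[\fy]$ by the quartic quantity $\int|\fy|^4\sinh^{-1}r\,dr$ (Proposition~\ref{prop:morawetz}); one then still needs a separate argument (approximate identity $Q_M$, interpolation of the $L^4_{t,x}$ bound against compactness, and a Strichartz bootstrap in a strengthened norm) to convert this into scattering of the critical element. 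Your plan of "time-averaged coercive quadratic form $\Rightarrow\fy_*\equiv0$" would be sound if such a form existed, but it does not, so the step as written fails. Second, you misidentify the source of the restriction $\la\le\Lmb$: it does not come from the linear level (the potential $\cosh 2P_\la$ or cross terms from $\coth r\,\partial_r$, all of which are harmless for every $\la\in[0,1)$), but from the signs of the genuinely nonlinear terms in $I[\fy]$ in the regime $\fy<0$, where $\tanh(2P_\la)$ multiplies unfavorable combinations of $\cosh(2\fy)-1$ and $\sinh(2\fy)-2\fy$. The threshold is the largest $\la$ for which the pointwise inequality \eqref{eq:mwtz:H2:key}, equivalently $(\tfrac14+s^2)(1-\la^2s^2)-2\la s(1-s^2)\ge 0$ for all $s=\tanh(r/2)\in(0,1)$, holds; this gives $\Lmb=0.57716\cdots$ and is unrelated to the spectral gap $\gapE$.
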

One of the key technical ingredients is a Bahouri-G\'erard type profile decomposition for waves on hyperbolic space established in a recent preprint~\cite{LOS2} following the work~\cite{IPS} of Ionescu, Pausader, Staffilani on the NLS.  The other main ingredient in the proof is Morawetz estimates for the linearized equation about $P_\la$, which is the main new computation in this paper.

\begin{rem}
We note that although there is a restriction on the endpoints allowed in Theorem~\ref{thm:main}, i.e, $ \la := \tanh\frac{ \psi_0( \infty)}{2} \leq \Lmb$, there is \emph{no restriction} on the energy of the data within the class $\E_\la$. This means that Theorem~\ref{thm:main} gives complete description of the asymptotic behavior of solutions with  data in admissible classes $\E_\la$, i.e., soliton resolution. 
\end{rem}
\begin{rem}
It is known even in the non-equivariant setting that all Euclidean wave maps $\R^{1+2} \to \Hp^2$ are globally regular and scatter~\cite{KS, ST1, ST2, Tao37}.  The key difference in the current setting is that the introduction of hyperbolic geometry on the domain allows for the presence of the nontrivial stationary solutions $P_\la$ and the more complicated dynamical picture outlined in Conjecture~\ref{c:sr}. 

\end{rem}

\subsection{Outline of the paper}

In Section~\ref{s:Struwe} we give a brief account of the argument needed to deduce the  version of Struwe's bubbling result outlined above. The point here is that singularity formation leads to the bubbling of a harmonic map from \emph{Euclidean space} $\R^2$ into $\NN$. For targets such as $\NN= \Hp^2$ where there are no such Euclidean harmonic maps all  solutions must be globally regular. Together with the existence of the family of asymptotically stable stationary solutions $P_\la$, this helps motivate the formulation of Conjecture~\ref{c:sr}. 

In Section~\ref{sec: scattering} we prove Theorem~\ref{thm:main}. The proof follows the concentration compactness/rigidity method introduced by Kenig and Merle~\cite{KM06, KM08}. The key ingredients for the concentration compactness argument are Bahouri-G\'erard type profile decompositions for waves on hyperbolic space established by the authors in~\cite{LOS2}.  We begin Section~\ref{sec: scattering} by outlining the key elements of the profile decompositions along with the reduction to a critical element, i.e., if Theorem~\ref{thm:main} fails in an energy class $\E_\la$ one can find a minimal solution in $\E_\la$ which does not scatter to $P_\la$, called the a critical element.  For the rigidity argument we prove Morewetz-type estimates that rule out the possibility of such critical elements. It is here where we must restrict to the range  $0 \le \la \le \Lmb$ to maintain control over terms with indeterminate signs; see Remark~\ref{rem:Lmb}. 

\section{Struwe bubbling and global regularity}\label{s:Struwe}
In this section we  observe that any equivariant wave map  $U: [0, T_+)   \times \Hp^2 \to \NN$ that blows up in finite time leads to the bubbling of a nontrivial Euclidean harmonic map $Q: \R^2 \to \NN$ in the sense of Struwe~\cite{Struwe}. This requires some mild assumptions on the target manifold. In this section we let  $\NN$  be a surface of revolution with metric $ds^2 = d \psi^2 + g(\psi) d \om^2$, where $g$ is an smooth odd function with $g(0) = 0$ and $g'(0) =1$. If $\NN$ is  compact, then we assume that $g$ has a first zero $ \rho_0>0$ and that $g$ is periodic with period $2 \rho_0$. If $\NN$ is non-compact we assume that  $g(\rho) >0$ for $\rho>0$ and that 
\EQ{
\int_0^\infty \abs{g(\rho)} d \rho  = + \infty
}
One can  keep in mind the model compact target $\NN = \Sp^2$ where $g(\rho) = \sin \rho$ and the model non-compact target $\NN =  \Hp^2$ with $g( \rho) = \sinh \rho$. 

\begin{prop}[Struwe Bubbling] {\rm \cite[Theorem $2.1$]{Struwe} } \label{p:struwe} Let $U: [0, T_+) \times \Hp^2 \to \NN$ be a finite energy equivariant  wave map that  blows up at time $T_+ < \infty$. Then there exists a sequence of times $t_n \to T_+$ and a sequence of positive numbers $ \mu_n = o(T_+- t_n)$ so that that rescaled sequence of maps 
\EQ{ \label{Un} 
U_n(t, r, \om) := U( t_n + \mu_n t, \mu_n r, \om ) \in H^1_{\loc}( (-1, 1) \times \R^2; \NN)
}
converges locally in $H^1_{\loc}((-1, 1) \times \R^2; \NN)$ to a nontrivial, finite energy harmonic map $Q: \R^2 \to  \NN$.

\end{prop}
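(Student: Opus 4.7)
The plan is to adapt Struwe's original bubbling argument~\cite{Struwe} for Euclidean equivariant wave maps to the hyperbolic background, exploiting the local-in-space nature of finite-time blow-up. The key observation is that under the parabolic rescaling $\psi_n(t, r) := \psi(t_n + \mu_n t,\, \mu_n r)$ with $\mu_n \to 0^+$, the coefficients $\mu_n \coth(\mu_n r)$ and $\mu_n^2/\sinh^2(\mu_n r)$ converge uniformly on compact subsets of $\{r > 0\}$ to $1/r$ and $1/r^2$, respectively. Thus $\psi_n$ formally satisfies the Euclidean equivariant wave map equation
\EQ{
\psi_{tt} - \psi_{rr} - \tfrac{1}{r}\psi_r + \frac{g(\psi)g'(\psi)}{r^2} = 0,
}
up to error terms that vanish in the limit, and the bubbling profile should be a solution of the corresponding elliptic equation.

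First I would establish the hyperbolic analogue of the Shatah--Struwe cone flux identity. By finite speed of propagation, the blow-up at $T_+ < \infty$ localizes inside the backward light cone $K = \{0 \le r \le T_+ - t\}$. Pairing~\eqref{wm} with $\psi_t \sinh r$ and integrating over truncated cones produces a nonincreasing local energy
\EQ{
E_{\mathrm{loc}}(t) := \frac{1}{2}\int_0^{T_+ - t} \Big( \psi_t^2 + \psi_r^2 + \frac{\sinh^2 \psi}{\sinh^2 r} \Big) \sinh r \, dr,
}
together with a nonnegative flux $\F(t_1, t_2) := E_{\mathrm{loc}}(t_1) - E_{\mathrm{loc}}(t_2)$ through the lateral boundary, satisfying $\F(t, T_+) \to 0$ as $t \nearrow T_+$. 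A virial identity obtained from the conformal-type multiplier $r \psi_r + (T_+ - t) \psi_t$, adapted to the cone and combined with a pigeonhole argument against the vanishing flux, then produces a sequence of times $t_n \to T_+$ and scales $\mu_n = o(T_+ - t_n)$ along which the rescaled time derivative vanishes in a time-averaged sense:
\EQ{
\int_{-1/2}^{1/2}\!\int_{0}^{1} (\rd_t \psi_n)^2\, \mu_n \sinh(\mu_n r)\, dr\, dt \;\longrightarrow\; 0.
}

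Given this, I would extract a subsequential weak $H^1_{\loc}$ limit $U_\infty = (\psi_\infty(r), \om)$ of $U_n$ on $(-1,1)\times\R^2$. The vanishing of time derivatives forces $\psi_\infty$ to be independent of $t$, and passing to the limit in the equation shows that $\psi_\infty$ solves the Euclidean equivariant harmonic map equation on $\R^2 \setminus \{0\}$. Finite energy removes the singularity at the origin, yielding a bona fide harmonic map $Q:\R^2 \to \NN$. Upgrading weak to strong convergence in $H^1_{\loc}((-1,1)\times\R^2;\NN)$ is standard via energy monotonicity and the equation. Nontriviality of $Q$ is forced by a positive lower bound on the energy concentrating at $(T_+, 0)$; the hypotheses on $g$ (periodicity in the compact case, divergence of $\int_0^\infty |g(\rho)|\, d\rho$ in the non-compact case) rule out the degenerate scenario in which the concentrating energy is absorbed by $\psi$ escaping to infinity in the target.

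The main technical obstacle is handling the singular potential $g(\psi_n) g'(\psi_n)/\sinh^2(\mu_n r)$ near $r = 0$ in the rescaled equation. As in Struwe's original Euclidean treatment, this is controlled using an equivariance-based pointwise bound on $\psi_n$ coming from finite energy via a Hardy-type inequality, together with the smoothness of $g$ at the origin. All other deviations from the Euclidean argument are $O(\mu_n)$ perturbations on compact subsets of $\{r > 0\}$, so conceptually the proof amounts to showing that Struwe's bubbling scheme is stable under replacing the Euclidean metric by a smooth metric that converges to it in the rescaling limit.
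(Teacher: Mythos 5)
Your proposal follows essentially the same route as the paper: reduce to Struwe's Euclidean bubbling by observing that the parabolically rescaled maps approximately solve the Euclidean equivariant equation, use the Christodoulou/Shatah--Tahvildar-Zadeh flux and conformal-multiplier identities to get non-concentration in the self-similar region and time-averaged vanishing of $\psi_t$, select $t_n,\mu_n$ by the pigeonhole/averaging argument of Struwe's Lemma 3.3, and identify the limit as a nontrivial finite-energy harmonic map via removable singularity and the concentration lower bound. The paper's sketch is the same modulo bookkeeping (it invokes H\'elein's theorem together with Sacks--Uhlenbeck for regularity of the weak limit, and defers the strong $H^1_{\loc}$ convergence to Struwe's original argument rather than to ``energy monotonicity''), so your write-up is a faithful blind reconstruction.
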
 
\begin{rem} \label{r:r} 
 To properly interpret the sequence of functions in~\eqref{Un}  note that we are using the following slight abuse of notation:  given a radial function $ V \in L^2_{\textrm{rad}}( \Hp^2 ; \R)$, $V = V(r)$ one obtains a radial function $V \in L^2_{\textrm{rad}}(  \R^2; \R)$ by simply viewing the hyperbolic radial variable $r$ as a Euclidean distance to the origin. One then has 
\EQ{
\| V\|_{L^2(\R^2; \R)}^2  = 2\pi \int_0^\infty \abs{V(r)}^2 r \,dr \le 2\pi \int_0^\infty \abs{V(r)}^2 \, \sinh r \, dr =  \| V\|_{L^2(\Hp^2; \R)}^2
} 
\end{rem}

\begin{rem} \label{r:reg}
An immediate consequence of  Proposition~\ref{p:struwe} is that all finite energy equivariant wave maps $U: [0, T_+) \times \Hp^2 \to \Hp^2$ are globally regular, i.e., $T_+ = + \infty$, since the negative curvature of the target precludes the existence of nontrivial  finite energy harmonic maps $Q: \R^2 \to \Hp^2$, see for example~\cite[Corollary $2.2$]{Struwe}. This fact, together with the existence of the asymptotically stable harmonic maps $P_\la$ provides motivation for Conjecture~\ref{c:sr}. 

The case $\NN = \Sp^2$ is more complicated since $Q_{\euc}(r, \om)  = (2 \arctan r, \om)$ is a nontrivial finite energy harmonic map from $\R^2 \to \Sp^2$ and thus finite time blow-up is not prevented by Proposition~\ref{p:struwe}. Indeed,  the explicit blow-up constructions~\cite{KST, RR, RS} for wave maps from $\R\times \R^2 \to \Sp^2$  likely can be extended to the hyperbolic setting.  In fact, the third author~\cite{Shah1} has carried out the blow-up construction from~\cite{KST} in the setting of wave maps $\R \times \Sp^2 \to \Sp^2$ with the explicit blow-up profile given by $Q_\euc$ as above and we expect that a similar argument should hold on the hyperbolic background here. For more on the case of wave maps $\R \times \Hp^2 \to \Sp^2$  see \cite{LOS1, LOS4}. 
\end{rem}

The proof of Proposition~\ref{p:struwe} follows essentially the exact same argument as~\cite[Proof of Theorem~$2.1$]{Struwe} so we only provide a very brief sketch referring to~\cite{Struwe} for details. One additional notation we require is the local energy 
\EQ{
\E_a^b(\vec  \psi(t)) := \frac{1}{2}\int_a^b \psi_t^2(t) + \psi_r^2(t) + \frac{g^2(\psi(t))}{\sinh^2 r} \, \sinh r \, dr
}
\begin{proof}[Sketch of the proof of Proposition~\ref{p:struwe}]
Let $ U(t, r, \om) := (\psi(t,r), \om)$ be an equivariant wave map as in  Proposition~\ref{p:struwe} blowing up at time $T_+< \infty$. By translating in time we can assume that $T_+ =0$ and that our initial data is given at time $T_0 = -1$. Equivariance and energy criticality together imply that blow up must occur by a concentration of energy at the tip of the backwards light cone 
\EQ{
\CC_0:= \{ (t, r) \mid  -1 \le t \le 0, \, \, 0 \le r \le  \abs{t} \} \subset   \R \times \Hp^2
}
meaning that there exists an $\e_0>0$ so that 
\EQ{ \label{limE} 
\liminf_{ t \to 0^-} \E_0^{\abs{t}}( \vec \psi(t))  \ge \e_0
}
Next, one can exploit the local energy conservation and positivity of the flux to prove that no energy can concentrate in the self-similar region of the cone, i.e., for every $0< \mu <1$ we have 
\EQ{ \label{ss} 
 \E_{\mu \abs{t}}^{\abs{t}}( \vec \psi(t)) \to 0 \mas t \to 0^-
}
and it follows from the above that 
\EQ{\label{ta} 
\frac{1}{\abs{t}} \int_t^0 \int_0^{\abs{s}} \psi_t^2(s) \,  \sinh r \, dr \, ds \to 0  \mas t \to 0^{-}
}
The proofs of~\eqref{ss} and~\eqref{ta} are nearly  identical to the classical arguments of Christodoulou, Tahvildar-Zadeh~\cite{CTZduke} and Shatah, Tahvildar-Zadeh~\cite{STZ92}.  For the precise details we  refer the reader to \cite[Lemma 2.2]{STZ92} or \cite[Lemma 8.2]{SSbook} with the only difference here being that for the purpose of estimates one can interchange $\sinh r  \simeq  r$  and $\cosh r \simeq 1$ uniformly in the region $r \le \abs{t}$ for $\abs{t}$ small. In fact, using the notation from Remark~\ref{r:r} one can now replace $\sinh r$ with $r$ in~\eqref{limE},~\eqref{ss},~\eqref{ta} to obtain Euclidean space versions of the above  for the hyperbolic wave map $U(t, r, \om)  = (\psi(t, r), \om)$. In particular, one has  
\ant{
  &\liminf_{ t \to 0^-}  \frac{1}{2} \int_0^{\abs{t}} \left(\psi_t^2(t) + \psi_r^2(t) + \frac{g^2(\psi(t))}{r^2}\right) \, r \, dr   \ge c_0\e_0>0,\\ 
  &  \frac{1}{\abs{t}} \int_t^0 \int_0^{\abs{s}} \psi_t^2(s) \,  r \, dr \, ds \to 0  \mas t \to 0^{-}
  }
We are now in precisely the same starting point as~\cite[proof of Theorem~$2.1$]{Struwe} apart from the fact that $\vec \psi(t)$ solves~\eqref{wm} rather than its Euclidean counterpart. One can now follow the exact same argument as~\cite[proof of Lemma $3.3$]{Struwe} to find times $t_n \to 0^-$ and scales $\mu_n = o(\abs{t_n})$ so that one has 
\begin{align} 
& \frac{1}{2} \int_0^{6 \mu_{n}} \left(\psi_t^2(t_{n}) + \psi_r^2(t_{n}) + \frac{g^2(\psi(t_{n}))}{r^2}\right) \, r \, dr \geq c_{0} \e_{0} > 0 \quad \hbox{ for every } n, \label{eq:mu-nonzero} \\
& \frac{1}{\mu_n} \int_{t_n - \mu_n }^{t_n +  \mu_n} \int_0^{\abs{t}}\abs{ \partial_t \psi(t, r)}^2 \, r \, dr \, dt  \to 0 \mas  n \to \infty \label{eq:mu-T-decay} 
\end{align}

Then, defining 
\EQ{
U_{n}(t, r, \om) :=(\psi_n(t, r), \om) = (\psi(t_n + \mu_n t, \mu_n r), \om) = U(t_n + \mu_n t, \mu_n r,  \om) 
}
one can deduce by changing variables above that 
\EQ{
\int_{-1}^1  \int_{B(0, \abs{t_n}/ \mu_n)} \abs{\partial_t U_n(t, r, \om)}^2 r \, dr \, d\om \, dt \to 0 \mas n \to \infty
}
Following a nearly identical argument to~\cite[proof of Theorem~$2.1$]{Struwe} one can  extract from the sequence $U_n$ a strong limit $U_{\infty}$ in  $H^1_{\loc}((-1, 1) \times \R^2; \NN)$. Again, the only change to the argument presented in~\cite{Struwe} is that the $U_n$ here only approximately solves the Euclidean wave map equation on fixed compact sets in $(-1, 1) \times \R^2$, i.e., $\psi_n$ solves 
\begin{multline*}
(\partial_t^2-\lap_\euc)\psi_n+\frac{g(\psi_n)g'(\psi_n)}{r^2}=\mu_n^2\left(\coth(\mu_n r)-\frac{1}{\mu_n r}\right)\psi_r(t_n+\mu_nt,\mu_n r)\\
									     +\mu_n^2\left(\frac{1}{(\mu_n r)^2}-\frac{1}{\sinh^2(\mu_n r)}\right)g(\psi_n)g'(\psi_n).
\end{multline*}
We claim that $U_{\infty}$ is a smooth non-constant finite energy harmonic map on Euclidean space.
First, passing to the distributional limit above we see that the limiting map $U_\infty: \R^2 -  \{0\} \to \NN$ is a weak harmonic map on Euclidean space away from $r =0$. By H\'elein's theorem~\cite{Hel} and the removable singularity theorem~\cite{SU}, $U_\infty$ extends to an entire harmonic map from $\R^2 \to \NN$ and is non-constant by~\eqref{eq:mu-nonzero} and the strong $H^{1}_{\loc}((-1, 1) \times \R^{2}; \NN)$ convergence. 
\end{proof}

\section{Scattering for large energy data}\label{sec: scattering}
This section is devoted to the proof of Theorem~\ref{thm:main}. We follow the Kenig-Merle concentration compactness/rigidity method \cite{KM06, KM08}, adapted to the setting of semilinear wave equation with potential in an earlier work \cite{LOS2} of the authors.

\subsection{Reduction to a problem on $\bbH^{4}$}
We begin by reducing the problem to a semilinear wave equation with a potential on $\bbH^{4}$.

Given a finite energy wave map $\psi$, we choose $\la$ such that $ \psi(0,\infty)=P_\la(\infty)$ and let $\varphi:=\psi-P_\la.$ Then $\varphi$ satisfies
\EQ{\label{perturbed eq}
\varphi_{tt}-\varphi_{rr}-\coth r \varphi_r = F(r, \varphi),
}
where
\begin{equation*}
	F(r, \varphi) :=
	\frac{\sinh(2 P_{\lmb})(1 - \cosh(2 \varphi)) - \cosh(2 P_{\lmb}) \sinh(2 \varphi)}{2 \sinh^{2} r}.
\end{equation*}
%
We can transform \eqref{perturbed eq} into a wave equation on $\R\times\Hp^4$ by making the change of variables $u:=\sinh^{-1}r\varphi.$ Indeed, $u$ satisfies

\ali{\label{4d eq}
&u_{tt}-u_{rr}-3\coth r u_r-2u+U_\la(r) u=\NN(r, u)\\
&\vec{u}(0):=(u(0),u_t(0))=(u_0, u_1),
}
where 
\ant{
&U_\la(r)=\frac{\cosh 2P_\la(r)-1}{\sinh^2r},\\
& \NN(r,u):=-\frac{\sinh 2P_\la}{\sinh^3r}\sinh^2(\sinh r \,u)+\frac{\cosh 2P_\la(2\sinh r \,u-\sinh(2\sinh r \,u))}{2\sinh^3r}.
}
We have moved the linear term $U_{\lmb}(r) u$ to the LHS, so as to reveal the underlying linear equation
\ali{\label{4d lin eq}
&(\partial_t^2-\lap_{\Hp^4}-2+U_\la)u_L=0.
}
We denote by $S(t) = (S_{0}(t), S_{1}(t))$ the propagator for \eqref{4d lin eq}, i.e., the function $\vec{u}_{L}(t) = (u_{L}, \rd_{t} u_{L})(t) = S(t) \vec{v}$ solves \eqref{4d lin eq} with $\vec{u}_{L}(0) = \vec{v}$. We define the \emph{linear conserved energy} $E(\vec{u})$ for \eqref{4d lin eq} as
\begin{equation*}
	E(\vec{u}) := \int_{\bbH^{4}} \abs{\nb u_{0}}^{2} + \abs{u_{1}}^{2} - 2 \abs{u_{0}}^{2} + U_{\lmb} \abs{u_{0}}^{2} \, \ud x.
\end{equation*}
Note that $E(\vec{u}_{L}(t))$ is independent of $t \in \bbR$ for a solution $\vec{u}_{L}$ to \eqref{4d lin eq}. Moreover, $E(\vec{u})$ is equivalent to $\nrm{\vec{u}}_{H^{1} \times L^{2}(\bbH^{4})}^{2}$ for every $\vec{u} \in H^{1} \times L^{2}(\bbH^{4})$.

The following lemma from \cite{LOS1} relates the $\calH_{0}$ norm of $\vec\varphi$, which enters in the scattering statement \eqref{eq:2d-scat}, with the $H^{1} \times L^{2}(\bbH^{4})$ norm of $\vec{u}$.

\begin{lem}[{Lemma 2.4 in \cite{LOS1}}] \label{2d to 4d} Let $(\varphi_0, \varphi_1) \in \HH_0(\Hp^2)$ with $\varphi_0(0)=0$, $\varphi_0( \infty) = 0$. Then if we define $(u_0, u_1)$ by $ (\varphi_{0}(r), \varphi_{1}(r)) = ( \sinh r \, u_0(r), \sinh r \, u_1(r))$,  we have
  \EQ{\label{H=H}
  \| (\varphi_{0}, \varphi_{1}) \|_{\HH_0}^2 \le  \|(u_0, u_1) \|_{H^1 \times L^2(\Hp^4)}^2 \le 9 \|(\varphi_{0}, \varphi_{1}) \|_{\HH_0}^2.
}
\end{lem}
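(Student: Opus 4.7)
The plan is to substitute $\varphi_{i} = \sinh r \cdot u_{i}$ explicitly and show that $\|(\varphi_{0}, \varphi_{1})\|_{\HH_{0}}^{2}$ transforms into a radial integral on $\bbH^{4}$ that differs from $\|(u_{0}, u_{1})\|_{H^{1} \times L^{2}(\bbH^{4})}^{2}$ only through a lower-order term controllable by the spectral gap of $-\Delta_{\bbH^{4}}$.

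Concretely, I would use $\partial_{r} \varphi_{0} = \cosh r \cdot u_{0} + \sinh r \cdot \partial_{r} u_{0}$ together with $\varphi_{0}^{2}/\sinh^{2} r = u_{0}^{2}$ to expand the integrand in $\|(\varphi_{0}, \varphi_{1})\|_{\HH_{0}}^{2}$, recognize the cross term $2\sinh^{2} r \cosh r \cdot u_{0} \partial_{r} u_{0}$ as the total derivative $\partial_{r}[\sinh^{2} r \cosh r \cdot u_{0}^{2}]$ modulo the bulk piece $-(2\sinh r \cosh^{2} r + \sinh^{3} r) u_{0}^{2}$, and integrate by parts. After the trigonometric simplification $\cosh^{2} r + 1 - 2\cosh^{2} r - \sinh^{2} r = -2 \sinh^{2} r$, this produces the key identity
\begin{equation*}
  \|(\varphi_{0}, \varphi_{1})\|_{\HH_{0}}^{2} = \int_{0}^{\infty} \bigl[(\partial_{r} u_{0})^{2} - 2 u_{0}^{2} + u_{1}^{2}\bigr] \sinh^{3} r \, dr,
\end{equation*}
which is nothing but the natural energy for the linearized equation \eqref{4d lin eq} with $U_{\lmb} = 0$. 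Structurally this reflects the operator intertwining $-\Delta_{\bbH^{2}} + \frac{1}{\sinh^{2} r} = \sinh r \circ (-\Delta_{\bbH^{4}} - 2) \circ (\sinh r)^{-1}$ on radial functions, which is exactly the conjugation underlying the $2$d-to-$4$d reduction used throughout the paper. The mild technicality is showing the boundary term $[\cosh r \cdot \varphi_{0}^{2}]_{0}^{\infty}$ from the integration by parts vanishes: it is zero at $r = 0$ because $\varphi_{0}(0) = 0$, and at $r = \infty$ the combination of $\varphi_{0}(\infty) = 0$ with the finite $\HH_{0}$-norm yields $|\varphi_{0}(R)|^{2} = o(e^{-R})$ via Cauchy--Schwarz on $\varphi_{0}(R) = -\int_{R}^{\infty} \partial_{r} \varphi_{0} \, dr$, which beats the growth of $\cosh R$; for rough $\varphi_{0}$ the identity extends by density from $C_{c}^{\infty}((0, \infty))$.

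From the identity, the lower bound $\|(\varphi_{0}, \varphi_{1})\|_{\HH_{0}}^{2} \le \|(u_{0}, u_{1})\|_{H^{1} \times L^{2}(\bbH^{4})}^{2}$ is trivial since $-2 u_{0}^{2} \le u_{0}^{2}$, while the upper bound reduces (discarding the manifestly nonnegative $u_{1}$ contribution on the right) to
\begin{equation*}
  18 \int u_{0}^{2} \sinh^{3} r \, dr \le 8 \int (\partial_{r} u_{0})^{2} \sinh^{3} r \, dr,
\end{equation*}
which is precisely the Poincar\'e/spectral-gap inequality $\int |\nabla u|^{2} \, \dvol_{\bbH^{4}} \ge \frac{(4-1)^{2}}{4} \int |u|^{2} \, \dvol_{\bbH^{4}} = \frac{9}{4} \int |u|^{2} \, \dvol_{\bbH^{4}}$ applied to radial $u_{0}$. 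This bound is sharp, saturated in the limit by ground-state-like radial functions, which is exactly what forces the factor $9$ in the lemma rather than anything smaller.
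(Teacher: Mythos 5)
Your computation is correct and is essentially the argument behind the cited lemma: substitute $\varphi_i=\sinh r\,u_i$, integrate the cross term by parts (the boundary term $[\cosh r\,\varphi_0^2]_0^\infty$ vanishes exactly as you argue, since $\int_R^\infty \sinh^{-1}r\,dr \simeq 2e^{-R}$), arrive at the identity $\|(\varphi_0,\varphi_1)\|_{\HH_0}^2=\int_0^\infty[(\partial_r u_0)^2-2u_0^2+u_1^2]\sinh^3 r\,dr$, and close with the spectral gap $\int|\nabla u|^2\ge\tfrac{9}{4}\int u^2$ on $\Hp^4$. The intertwining relation you cite is also the correct structural explanation.

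One point you must make explicit, because your write-up is currently inconsistent about it: the constant $9$ forces the convention $\|u_0\|_{H^1(\Hp^4)}^2=\int_0^\infty(\partial_r u_0)^2\sinh^3 r\,dr$, i.e.\ the homogeneous norm (equivalent to the inhomogeneous one precisely because of the spectral gap, and the convention of \cite{LOS1}). Under that convention the lower bound follows from $-2u_0^2\le 0$ --- not from ``$-2u_0^2\le u_0^2$'', which is the inhomogeneous comparison --- and the upper bound reduces exactly to $18\int u_0^2\le 8\int(\partial_r u_0)^2$, i.e.\ the sharp gap $\tfrac{9}{4}$, as you wrote. If instead one uses the inhomogeneous norm $\int[(\partial_r u_0)^2+u_0^2]\sinh^3 r\,dr$, the required inequality becomes $19\int u_0^2\le 8\int(\partial_r u_0)^2$, which \emph{fails} for near-optimizers of the spectral gap; the sharp constant in that convention is $13$, not $9$. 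So your proof is right, but only after fixing the single norm convention that your two bounds currently treat differently.
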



\subsection{Concentration compactness and extraction of a critical element} \label{sec:BG}
The first step of the Kenig-Merle strategy is to establish the existence of a \emph{critical element} (i.e., a minimal non-scattering solution), under the assumption that the conclusion of Theorem~\ref{thm:main} fails.  The precise statement is as follows.
\begin{prop} \label{prop:crit-elt}
Let $0 \leq \lmb < 1$. Suppose that scattering to $P_{\lmb}$ fails for some finite energy initial data for \eqref{wm} in $\calE_{\lmb}$. Then there exists a \emph{nontrivial} solution $u_{\ast}$ to \eqref{4d eq} on $[0, \infty) \times \bbH^{2}$ whose forward trajectory $K_{+} := \set{\vec{u}_{\ast}(t) : t \in [0, \infty)}$ is pre-compact in $H^{1} \times L^{2}(\bbH^{4})$.
\end{prop}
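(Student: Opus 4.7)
The plan is to run the Kenig--Merle concentration compactness argument in the $\bbH^{4}$ reformulation \eqref{4d eq}, using the Bahouri--G\'erard profile decomposition of \cite{LOS2} adapted to the linear equation \eqref{4d lin eq}. First I would establish small data global well-posedness and scattering for \eqref{4d eq} in $H^{1}\times L^{2}(\bbH^{4})$: the nonlinearity $\NN(r,u)$ vanishes to order $(\sinh r\,u)^{3}$, so it behaves like the energy-critical cubic nonlinearity on $\bbR^{4}$; combined with Strichartz estimates for \eqref{4d lin eq} (obtained by treating the smooth, exponentially decaying potential $U_{\lmb}$ as a perturbation of the free wave equation on $\bbH^{4}$), this produces a small data theory and a standard long-time perturbation lemma. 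Define the critical energy
\[
E_{c} := \inf\{\, E(\vec v) : \vec v \in H^{1}\times L^{2}(\bbH^{4}) \text{ radial, the solution to \eqref{4d eq} with data } \vec v \text{ does not scatter}\,\}.
\]
Small data theory forces $E_{c} > 0$, and the hypothesis of the proposition together with Lemma~\ref{2d to 4d} gives $E_{c} < \infty$.

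Now choose a minimizing sequence $\vec u_{n}(0)$ with $E(\vec u_{n}(0)) \to E_{c}$ whose solutions $\vec u_{n}$ do not scatter. Passing to a subsequence, I would apply the profile decomposition of \cite{LOS2}, writing
\[
\vec u_{n}(0) = \sum_{j=1}^{J} S(-t_{n}^{j})\vec V^{j} + \vec w_{n}^{J}(0),
\]
where the Strichartz norm of $S(t)\vec w_{n}^{J}(0)$ tends to $0$ as $J\to\infty$, and with a Pythagorean decomposition of the linear energy $E$. Associate to each linear profile $\vec V^{j}$ a nonlinear profile $\vec U^{j}$ solving \eqref{4d eq}, matched to $\vec V^{j}$ at $t=0$ if $t_{n}^{j}$ is bounded and at $\pm\infty$ via linear scattering if $t_{n}^{j}\to\pm\infty$. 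If more than one profile were nontrivial, the Pythagorean identity would give $E(\vec U^{j})<E_{c}$ for each nontrivial $j$, so every $\vec U^{j}$ would scatter by definition of $E_{c}$, and the long-time perturbation lemma would force $\vec u_{n}$ to scatter for large $n$, a contradiction. Hence exactly one nonlinear profile is nontrivial, and a subsequential limit yields a nontrivial solution $\vec u_{\ast}$ to \eqref{4d eq} with $E(\vec u_{\ast}) = E_{c}$ that does not scatter on its maximal forward interval $[0, T_{+})$.

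To upgrade $\vec u_{\ast}$ to a solution with pre-compact forward trajectory, I would suppose toward contradiction that there is a sequence $t_{n} \in [0, T_{+})$ along which $\vec u_{\ast}(t_{n})$ admits no $H^{1}\times L^{2}(\bbH^{4})$-convergent subsequence. Applying the profile decomposition to $\{\vec u_{\ast}(t_{n})\}$ and rerunning the single-profile argument of the previous paragraph, any additional nontrivial profile would again contradict the minimality $E(\vec u_{\ast})=E_{c}$ via the perturbation lemma, while any profile with $t_{n}^{j}\to\pm\infty$ would contribute a purely scattering piece incompatible with the non-scattering of $\vec u_{\ast}$. Thus the decomposition must reduce to a single profile with bounded concentration time and a remainder that tends to zero strongly, forcing relative compactness of $\{\vec u_{\ast}(t_{n})\}$, contradicting the choice of $t_{n}$. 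Pre-compactness then excludes finite-time blowup, so $T_{+}=\infty$ and $K_{+}$ is pre-compact as claimed.

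The main obstacle I anticipate is the compatibility of the potential $U_{\lmb}$ with the profile decomposition from \cite{LOS2}, which is formulated for the free wave equation on $\bbH^{4}$. Concretely, one must (i) develop a linear scattering theory for \eqref{4d lin eq} so that profiles with $t_{n}^{j}\to\pm\infty$ can be identified with asymptotic free waves on $\bbH^{4}$, which in turn requires ruling out embedded eigenvalues and threshold resonances of $-\lap_{\bbH^{4}}-2+U_{\lmb}$ in the radial class; and (ii) verify that all perturbation and orthogonality estimates survive the restriction to the equivariant subspace. Once these two points are in place, the concentration-compactness scheme runs in its now-standard form and produces the critical element $u_{\ast}$.
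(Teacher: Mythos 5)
Your overall architecture (a critical energy level, profile decomposition, reduction to a single surviving profile, then rigidity of the trajectory) matches the paper's, but two of your steps do not work as written. The first concerns your critical quantity: you define $E_{c}$ as the infimum of the \emph{linear} energy $E(\vec v)$ of the data at time $0$. The functional $E$ is conserved only along the linear flow \eqref{4d lin eq}; it is \emph{not} conserved along solutions of the nonlinear equation \eqref{4d eq} (the conserved quantity there is the nonlinear wave-map energy). Hence $E(\vec u_{\ast}(t_{n}))$ need not equal $E_{c}$, and your trajectory-compactness step --- ``any additional nontrivial profile would again contradict the minimality $E(\vec u_{\ast})=E_{c}$'' --- has no basis: the Pythagorean decomposition applied at time $t_{n}$ splits a quantity that may exceed $E_{c}$, so you cannot conclude that each profile lies below the critical level and scatters. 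The paper circumvents this by minimizing $\sup_{t\in[0,\infty)}E(\vec u(t))$ over forward-in-time nonscattering solutions; this quantity is non-increasing under restriction of the trajectory to $[\tau,\infty)$ and still at least $A_{C}$ for the restarted (still nonscattering) solution, so it equals $A_{C}$ for every $\tau$ (this is \eqref{eq:crit-elt-prop}), which is exactly what the rigidity step requires. If you instead minimize the conserved nonlinear energy, you must separately prove an asymptotic Pythagorean expansion for that functional, since Proposition~\ref{prop:lin-prof} decouples only $E$.

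The second gap is that your profile decomposition $\sum_{j} S(-t_{n}^{j})\vec V^{j}$ contains only time translations. The decomposition actually available (Proposition~\ref{prop:lin-prof}, from \cite{LOS2}) also produces \emph{Euclidean} profiles with concentrating frequency parameters $\lmb_{n,j}\to\infty$, implemented through the rescaling maps $\calT_{\lmb_{n,j}}$: the defect of compactness here comes from scaling as well as from time translation. Your argument never addresses the possibility that the single surviving nontrivial profile is a concentrating one, in which case it would not yield a fixed critical element in $H^{1}\times L^{2}(\bbH^{4})$ at all. Excluding this requires showing that every Euclidean nonlinear profile scatters, which rests on the scattering theorem for the limiting Euclidean equation \eqref{4d eq-euc} (Theorem~\ref{thm:euc-scat}, from \cite{CKLS1}) combined with the nonlinear Euclidean approximation result (Proposition~\ref{prop:euc-evol}). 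This input is essential and absent from your proposal. By contrast, the spectral issue you flag for $-\lap_{\bbH^{4}}-2+U_{\lmb}$ is legitimate but was already settled by the Strichartz estimates of \cite{LOS1}; it is not where the real difficulty lies.
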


Our proof of Proposition~\ref{prop:crit-elt} relies on the concentration compactness method \cite{KM06, KM08}. Execution of this strategy in this setting requires several ingredients, which were mostly established in the previous work \cite{LOS1, LOS2} of the authors.

We begin with perturbative theory of the nonlinear equation \eqref{4d eq}. For a time interval $I \subset \bbR$, we define the \emph{scattering norm} $S(I)$ by
\begin{equation} \label{eq:S-norm}
	\nrm{u}_{S(I)} :=  \nrm{u}_{L^{3}_{t}(I; L^{6}(\bbH^{4}))} 
\end{equation}
For simplicity of notation, we will often omit $\bbH^{4}$ in norms and write $L^{p} = L^{p}(\bbH^{4})$.

\begin{prop}[Local Cauchy theory] \label{prop:small-data}
Let $\vec{u}(0) = (u_{0}, u_{1}) \in H^{1} \times L^{2}$ be a radial initial data set, and denote by $\vec{u}(t) \in C_{t}(\bbR; H^{1} \times L^{2})$ the corresponding unique solution to \eqref{4d eq} given by Proposition~\ref{p:struwe} and Remark~\ref{r:reg}. The solution $\vec{u}(t)$ scatters as $t \to \infty$ to a free shifted wave $\vec{u}_{L}(t)$, i.e., a solution $\vec{u}_{L}(t) \in C_{t}(\bbR; H^{1} \times L^{2})$ of
\begin{equation*}
	v_{tt} - v_{rr} - 3 \coth r \, v_{r} - 2 v = 0
\end{equation*}
if and only if
\begin{equation*}
	\nrm{u}_{S([0, \infty))} + \nrm{\vec{u}}_{L^{\infty}_{t}([0, \infty); H^{1} \times L^{2})} < \infty.
\end{equation*}
An analogous statement holds in the negative time direction $t \to -\infty$.
Moreover, there exists a constant $\dlt > 0$ so that 
\begin{equation*}
	\nrm{\vec{u}(0)}_{H^{1} \times L^{2}} < \dlt \imp
	\nrm{u}_{S(\bbR)} + \nrm{\vec{u}}_{L^{\infty}_{t} (\bbR; H^{1} \times L^{2})} \aleq \nrm{\vec{u}(0)}_{H^{1} \times L^{2}}.
\end{equation*}
Hence $\vec{u}(t)$ scatters to free shifted waves as $t \to \pm \infty$.
\end{prop}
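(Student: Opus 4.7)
My plan is to combine Strichartz estimates for the shifted linear propagator $S(t)$ of \eqref{4d lin eq} with H\"older--Sobolev estimates on the nonlinearity $\NN(r, u)$, and then close a Banach contraction argument in the natural norm $\nrm{u}_{S(I)} + \nrm{\vec{u}}_{L^{\infty}_{t}(I; H^{1} \times L^{2})}$. The required linear inequality
\begin{equation*}
\nrm{u_{L}}_{S(\bbR)} + \nrm{\vec{u}_{L}}_{L^{\infty}_{t}(H^{1} \times L^{2})} \aleq_{\la} \nrm{\vec{u}_{L}(0)}_{H^{1} \times L^{2}} + \nrm{F}_{N(\bbR)},
\end{equation*}
with $\nrm{\cdot}_{N}$ a suitable dual-Strichartz norm, is available for the unperturbed shifted operator $\rd_{t}^{2} - \lap_{\bbH^{4}} - 2$ from the companion paper \cite{LOS2}; since $U_{\lmb}(r) = (\cosh 2P_{\la}(r) - 1)/\sinh^{2} r$ is smooth, bounded, and decays like $e^{-2r}$ at spatial infinity, it is absorbed into the linear problem as a compact perturbation, with the uniform-in-$t$ equivalence of $E(\vec u)$ and $\nrm{\vec u}_{H^{1}\times L^{2}}$ already observed in the excerpt.

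For the nonlinear estimate I would Taylor-expand $\NN(r, u)$ in the variable $(\sinh r)\, u$, which produces
\begin{equation*}
\NN(r, u) = V_{1}(r)\, u^{2} + V_{3}(r)\, u^{3} + (\text{higher-order terms}),
\end{equation*}
where $V_{1}(r) = -\sinh(2 P_{\la}(r))/\sinh r$ is smooth and bounded on $[0, \infty)$ (using $P_{\la}(r) \aeq \la r$ near the origin and $P_{\la}(r) \to 2 \arctanh \la$ as $r \to \infty$) and decays like $e^{-r}$ at spatial infinity, while $V_{3}(r)$ is a bounded multiple of $\cosh 2 P_{\la}(r)$. By H\"older's inequality, the Sobolev embedding $H^{1}(\bbH^{4}) \hookrightarrow L^{4}(\bbH^{4})$, and the fact that $V_{1} \in L^{p}(\bbH^{4})$ for every $p \in (1, \infty]$, one arrives at a nonlinear bound of schematic form
\begin{equation*}
\nrm{\NN(r, u)}_{N(I)} \aleq_{\la,\, \nrm{\vec{u}}_{L^{\infty}_{t}(H^{1}\times L^{2})}} \nrm{u}_{S(I)}^{2} + \nrm{u}_{S(I)}^{3} + \dotsb,
\end{equation*}
together with Lipschitz analogues controlling differences of two solutions.

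With these ingredients, the small-data assertion reduces to a standard Banach contraction in the ball $\{\vec{u} : \nrm{u}_{S(\bbR)} + \nrm{\vec{u}}_{L^{\infty}_{t}(H^{1}\times L^{2})} \le C\dlt\}$ provided $\dlt$ is sufficiently small. For the ``if and only if'' statement, when $\nrm{u}_{S([0,\infty))} + \nrm{\vec{u}}_{L^{\infty}_{t}(H^{1}\times L^{2})} < \infty$ I would subdivide $[0, \infty)$ into finitely many intervals $I_{j}$ on which $\nrm{u}_{S(I_{j})}$ is so small that the Duhamel integrand $S(-t) F(\vec u(t))$ converges in $H^{1} \times L^{2}$; concatenating across intervals yields the Cauchy criterion for scattering and hence existence of $\vec{u}_{L}^{+}$. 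The converse is immediate by applying homogeneous Strichartz to $\vec{u}_{L}^{+}$ and then invoking short-time stability to recover $\nrm{u}_{S([T, \infty))} < \infty$ for large $T$. I expect the main technical subtlety to lie in the quadratic term $V_{1}(r) u^{2}$, which is the roughest contribution and is critical with respect to the $L^{3}_{t} L^{6}_{x}$ scaling; it cannot be absorbed by an $L^{\infty}_{t} H^{1}$ energy bound alone, and what saves the argument is the spatial decay of $V_{1}$, which allows a careful H\"older redistribution to place $V_{1} u^{2}$ into a dual-Strichartz space with no loss.
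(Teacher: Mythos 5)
Your overall route—global-in-time Strichartz estimates for the shifted linear flow, Taylor expansion of $\calN(r,u)$ into a quadratic term with exponentially decaying coefficient plus a cubic term with bounded coefficient, and a contraction/continuity argument in $\nrm{u}_{S(I)} + \nrm{\vec u}_{L^\infty_t(H^1\times L^2)}$—is exactly the one the paper relies on: it does not prove the proposition here but cites \cite[Proposition~5.3]{LOS1}, whose proof is precisely this scheme, and your treatment of the quadratic term $V_1(r)u^2$ via the spatial decay of $V_1$ (placing it in the dual space $L^{3/2}_t L^{12/7}_x$, cf.\ the $N(I)$ norm \eqref{eq:N-norm}) matches the estimate the paper reproduces in the rigidity step. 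Your identification of $V_1=-\sinh(2P_\lambda)/\sinh r$ and its $e^{-r}$ decay, and of $V_3$ as a bounded multiple of $\cosh 2P_\lambda$, is correct and consistent with the bound $|\calN(r,u)|\lesssim e^{-r}|u|^2+|u|^3$ used in the paper.

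The one genuine gap is your justification of the linear input. You assert that because $U_\lambda(r)=(\cosh 2P_\lambda-1)/\sinh^2 r$ is smooth, bounded, and decays like $e^{-2r}$, it ``is absorbed into the linear problem as a compact perturbation.'' Compactness of the perturbation is not enough: a compactly supported, arbitrarily nice potential can create an eigenvalue (or a threshold resonance) for $-\lap_{\bbH^4}-2+U_\lambda$, and any such bound state destroys global-in-time Strichartz and dispersive estimates, hence the whole scattering argument. The paper's source, \cite[Proposition~4.2]{LOS1}, proves Strichartz estimates for the \emph{perturbed} operator \eqref{4d lin eq}, and this requires spectral information specific to $U_\lambda$ (absence of point spectrum and of a resonance at the edge of the continuous spectrum), which there is tied to the fact that $P_\lambda$ is a stable, energy-minimizing harmonic map in its class; the potential-free estimates from \cite{AP, MTay12} are only the starting point. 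Also note the related claim that the uniform equivalence $E(\vec u)\aeq\nrm{\vec u}_{H^1\times L^2}^2$ ``already observed in the excerpt'' handles this: that equivalence is a coercivity statement for the quadratic form, and while it rules out negative eigenvalues, it does not by itself give the dispersive/Strichartz theory you need. You should either invoke \cite[Proposition~4.2]{LOS1} directly or supply the spectral verification; as written, this step of your argument would not go through.
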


Given a function $u$ on $I \times \bbH^{4}$ such that $(u, \rd_{t} u) \in C_{t}(I; H^{1} \times L^{2})$, define
\begin{equation} \label{eq:glei}
\glei(u) := u_{tt} - u_{rr} - 3 \coth r u_{r} - 2u + U_{\lmb}(r) u - \calN(r, u)
\end{equation}
in the sense of distributions. Define also the norm $N(I)$ by
\begin{equation} \label{eq:N-norm}
	\nrm{F}_{N(I)} := \nrm{F}_{L^{1}_{t}(I; L^{2}(\bbH^{4})) + L^{\frac{3}{2}}_{t}(I; L^{\frac{12}{7}}(\bbH^{4}))}.
\end{equation}
\begin{prop}[Perturbation lemma] \label{prop:pert}
There exist non-decreasing functions \\$\eps_{0}, C_{0} : (0, \infty) \to (0, \infty)$ such that the following holds. Let $I \subset \bbR$ be an open time interval (possibly unbounded) and $t_{0} \in I$. Consider $\vec{u}, \vec{v} \in C_{t}(I; H^{1} \times L^{2})$ satisfying the bounds
\begin{align*}
	\nrm{\vec{v}}_{L^{\infty}_{t}(I; H^{1} \times L^{2})} + \nrm{v}_{S(I)} \leq & A, \\
	\nrm{\glei(u)}_{N(I)} + \nrm{\glei(v)}_{N(I)} + \nrm{w_{L}}_{S(I)} \leq & \eps,
\end{align*}
for some $A$ and $0 < \eps \leq \eps_{0}(A)$, where $w_{L} (t) := S(t - t_{0})(\vec{u} - \vec{v})(t_{0})$. Then $\vec{u}$ obeys the bound
\begin{equation} 
	\nrm{\vec{u} - \vec{v} - \vec{w}_{L}}_{L^{\infty}_{t} (I; H^{1} \times L^{2})} + \nrm{\vec{u} - \vec{v}}_{S(I)} \leq C_{0}(A) \eps.
\end{equation}
\end{prop}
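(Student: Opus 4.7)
The plan is to carry out the standard Kenig-Merle-type perturbation argument, built around two ingredients. The first is the linear Strichartz estimate for the shifted wave operator $\glei_{L} := \rd_{t}^{2} - \Delta_{\bbH^{4}} - 2 + U_{\lmb}(r)$ supplied by the linear theory of \cite{LOS2}: for any subinterval $J \subset I$, $t_{0} \in J$, and any $w$ with sufficient regularity,
\begin{equation*}
\|\vec{w}\|_{L^{\infty}_{t}(J; H^{1} \times L^{2})} + \|w\|_{S(J)} \lesssim \|\vec{w}(t_{0})\|_{H^{1} \times L^{2}} + \|\glei_{L} w\|_{N(J)}.
\end{equation*}
The second is a nonlinear Lipschitz estimate of the form
\begin{equation*}
\|\calN(r, u_{1}) - \calN(r, u_{2})\|_{N(J)} \leq Q\bigl(\|u_{1}\|_{S(J)} + \|u_{2}\|_{S(J)}\bigr)\,\|u_{1} - u_{2}\|_{S(J)},
\end{equation*}
where $Q$ is continuous with $Q(0) = 0$. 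This will be proved by a Taylor expansion of $\calN(r, u)$ in $u$ together with H\"older's inequality in the dual Strichartz pair $(L^{3/2}_{t}, L^{12/7}_{x})$ paired against the $S$-norm $(L^{3}_{t}, L^{6}_{x})$.

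\textbf{Execution.} Set $\tilde{w} := u - v - w_{L}$, so that $\vec{\tilde{w}}(t_{0}) = 0$ and
\begin{equation*}
\glei_{L}(\tilde{w}) = \bigl[\calN(r, u) - \calN(r, v)\bigr] + \bigl[\glei(u) - \glei(v)\bigr].
\end{equation*}
Fix a parameter $\eta = \eta(A) > 0$ to be chosen small, and partition $I$ into a finite number $J = J(A, \eta)$ of consecutive subintervals $I_{j} = [\tau_{j}, \tau_{j+1}]$ (with $t_{0}$ as an endpoint of one of them) satisfying $\|v\|_{S(I_{j})} \leq \eta$ for each $j$; this is possible thanks to $\|v\|_{S(I)} \leq A$. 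On each $I_{j}$, the Strichartz estimate applied to the difference equation together with the nonlinear Lipschitz bound and the decomposition $u - v = w_{L} + \tilde{w}$ yields
\begin{align*}
&\|\vec{\tilde{w}}\|_{L^{\infty}(I_{j}; H^{1} \times L^{2})} + \|\tilde{w}\|_{S(I_{j})} \\
&\qquad \leq C \Bigl[\|\vec{\tilde{w}}(\tau_{j})\|_{H^{1} \times L^{2}} + \eps + Q\bigl(2\eta + \|w_{L}\|_{S(I_{j})} + \|\tilde{w}\|_{S(I_{j})}\bigr)\bigl(\|w_{L}\|_{S(I_{j})} + \|\tilde{w}\|_{S(I_{j})}\bigr) \Bigr].
\end{align*}
Choosing $\eta(A)$ small enough (so that $Q(3\eta(A)) < \tfrac{1}{2C}$) and then $\eps_{0}(A)$ small enough (so that $\eps \leq \eps_{0}(A)$ forces $\|w_{L}\|_{S(I_{j})}$ to be small in each partition piece), a standard continuity argument in the upper endpoint of $I_{j}$ closes the bound, giving
\begin{equation*}
\|\vec{\tilde{w}}\|_{L^{\infty}(I_{j}; H^{1} \times L^{2})} + \|\tilde{w}\|_{S(I_{j})} \leq 2C\bigl(\|\vec{\tilde{w}}(\tau_{j})\|_{H^{1} \times L^{2}} + \eps\bigr).
\end{equation*}
Iterating over the $J(A)$ subintervals then yields the conclusion, with final constants $\eps_{0}(A) \sim (2C)^{-J(A)}$ and $C_{0}(A) \sim (2C)^{J(A)}$. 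The time direction $t < t_{0}$ is handled identically.

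\textbf{Main obstacle.} The genuine technical work lies in the nonlinear Lipschitz estimate. The nonlinearity $\calN(r, u)$ is analytic but transcendental: its Taylor expansion in $u$ produces coefficients of the schematic form $(\sinh r)^{k-3}$, which are unbounded in $r$ for $k \geq 4$. The resolution is to factor
\begin{equation*}
\calN(r, u) = u^{2}\, N_{2}(r, \sinh r \cdot u) + u^{3}\, N_{3}(r, \sinh r \cdot u),
\end{equation*}
where $N_{2}, N_{3}$ are smooth and bounded on bounded sets, and the leading coefficient $N_{2}(r, 0) = -\sinh(2P_{\lmb})/\sinh r$ is uniformly bounded in $r$ thanks to the explicit form of $P_{\lmb}$. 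One then exploits that on any subinterval $I_{j}$ the smallness of $\|u_{1}\|_{S(I_{j})}, \|u_{2}\|_{S(I_{j})}$ forces $\sinh r \cdot u_{i}$ into a region where $N_{2}, N_{3}$ behave like bounded smooth functions, after which H\"older in the Strichartz pair produces the desired polynomial Lipschitz factor $Q$ (of degree at most two). The rest of the argument is a mechanical adaptation of the classical perturbation lemma in the Kenig-Merle framework.
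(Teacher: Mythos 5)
Your overall scheme --- linear Strichartz estimate for $\glei_{L} = \rd_{t}^{2} - \lap_{\bbH^{4}} - 2 + U_{\lmb}$ in the $S(I)$/$N(I)$ duality, a nonlinear Lipschitz bound, partition of $I$ into $O_{A,\eta}(1)$ subintervals on which $\nrm{v}_{S(I_{j})} \leq \eta$, a continuity argument on each piece, and iteration --- is precisely the argument the paper has in mind: the paper does not write out a proof but refers to the Strichartz estimates of \cite{LOS1} and to the analogous perturbation lemma in \cite{LOS2}, and your factorization $\calN(r,u) = u^{2} N_{2}(r, \sinh r\, u) + u^{3} N_{3}(r, \sinh r\, u)$ with the $e^{-r}$-type weight on the quadratic piece is exactly the structure behind the bound $\abs{\calN(r,u)} \aleq e^{-r}\abs{u}^{2} + \abs{u}^{3}$ quoted later in the paper.

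There is, however, one step whose justification as written is wrong and needs repair: you claim that smallness of $\nrm{u_{i}}_{S(I_{j})}$ ``forces $\sinh r \cdot u_{i}$ into a region where $N_{2}, N_{3}$ behave like bounded smooth functions.'' The $S$ norm is $L^{3}_{t}L^{6}_{x}$, which gives no pointwise control whatsoever, so this does not tame the transcendental tail of $\calN$ (whose Taylor coefficients grow like $(\sinh r)^{k-3}$, as you yourself note). The correct mechanism is the uniform-in-time energy bound: for radial $u \in H^{1}(\bbH^{4})$ one has the pointwise estimate $\sup_{r} \abs{\sinh r \, u(r)} = \sup_{r}\abs{\varphi(r)} \aleq \nrm{\varphi}_{\calH_{0}} \aleq \nrm{u}_{H^{1}(\bbH^{4})}$ (fundamental theorem of calculus plus Cauchy--Schwarz in the $2d$ variable, cf.\ Lemma~\ref{2d to 4d}). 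This is available for $v$ from the hypothesis $\nrm{\vec{v}}_{L^{\infty}_{t}(I; H^{1}\times L^{2})} \leq A$, and for $u$ it must be propagated as part of the bootstrap on each $I_{j}$ (using $u = v + w_{L} + \tilde{w}$ together with the conserved linear energy of $w_{L}$; note this tacitly requires some a priori $H^{1}\times L^{2}$ control on $(\vec{u}-\vec{v})(t_{0})$, which is implicit in how the lemma is applied). With the pointwise bound on $\sinh r\, u_{i}$ in hand, $N_{2}$ and $N_{3}$ are bounded by constants depending on $A$, the Lipschitz factor becomes $Q = Q_{A}$, and the rest of your argument goes through unchanged.
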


Proposition~\ref{prop:small-data} was proved in \cite[Proposition~5.3]{LOS1} as a consequence of Strichartz estimates for \eqref{4d lin eq} established in \cite[Proposition~4.2]{LOS1}, which in turn relied on the previous work \cite{AP, MTay12} in the potential-free case. Proposition~\ref{prop:pert} also follows from the Strichartz estimates, by an argument similar to \cite[Proof of Lemma~5.2]{LOS2}. We remark that the necessary algebraic properties for the nonlinearity $\calN(r, u)$ are easy to verify, since $\calN$ is analytic.

A key ingredient in the proof of Proposition~\ref{prop:crit-elt} is a \emph{linear profile decomposition} for \eqref{4d lin eq}, which was first proved in the case of the wave equation on $\bbR \times \bbR^{3}$ by Bahouri and G\'erard \cite{BG}, and the Schr\"odinger equation on $\bbR \times \bbH^{3}$ by Ionescu, Pausader and Staffilani \cite{IPS}. In \cite[Theorem~4.2]{LOS2}, we proved a linear profile decomposition for a fairly general class of wave equations on $\bbR \times \bbH^{d}$, including wave equations with spectral shifts and perturbations by a time-independent potential. Here we only state a simpler version that suffices for our use.

\begin{defn}[Linear profiles] \label{def:lin-prof}
A \emph{linear profile} is a sequence of linear waves $\vec{U}_{\Box, n, L}$, constructed out of an associated sequence of parameters $\set{t_{n}, \lmb_{n}} \subseteq \bbR \times [1, \infty)$ (time and frequency) and a \emph{limiting profile}. We allow for two types of behavior of the frequencies: either $\lmb_{n} = 1$ for all $n$, or $\lim_{n \to \infty} \lmb_{n} = \infty$. Correspondingly, we define two types of linear profiles, indicated by a subscript in place of $\Box$.

\begin{itemize}[leftmargin=*]
\item {\it Case 1: A hyperbolic profile $(\Box = \hyp)$.}
Given a radial function $\vec{U}_{\hyp} \in H^{1} \times L^{2}(\Hp^4)$ (called a \emph{hyperbolic limiting profile}) and a sequence of parameters $\set{t_{n}, \lmb_{n}}$ with $\lmb_{n} = 1$ for all $n$, we define the associated \emph{hyperbolic} (or \emph{stationary}) \emph{linear profile} $\vec{U}_{\hyp, n, L}$ by
\begin{equation*}
	\vec{U}_{\hyp, n, L} (t) = S(t - t_{n}) \vec{U}_{\hyp}.
\end{equation*}

\item {\it Case 2: A Euclidean profile $(\Box = \euc)$.}
Given a radial function $\vec{V}_{\euc} \in \dot{H}^{1} \times L^{2}(\bbR^{4})$ (called a \emph{Euclidean limiting profile}) and a sequence of parameters $\set{t_{n}, \lmb_{n}}$ with $\lim_{n \to \infty} \lmb_{n} = \infty$, we define the associated \emph{Euclidean linear profile} $\vec{U}_{\euc, n, L}$ by 
\begin{equation*}
	\vec{U}_{\euc, n, L} (t):= S(t - t_{n}) \calT_{\lmb_{n}} \vec{V}_{\euc}.
\end{equation*}
Here $\calT_{\lmb_{n}}$ is a mapping from $\dot{H}^{1} \times L^{2}(\bbR^{4})$ to $H^{1} \times L^{2}(\bbH^{4})$ defined by
\begin{equation*}
	\calT_{\lmb_{n}}(f, g) (r, \omg)
	= \big( \lmb_{n} \, (\chi_{\sqrt{\lmb_{n}}} \, e^{\lmb_{n}^{-1} \lap} f) (\lmb_{n} r, \omg), 
		\lmb_{n}^{2} \, (\chi_{\sqrt{\lmb_{n}}} \, e^{\lmb_{n}^{-1} \lap} g) (\lmb_{n} r, \omg) \big),
\end{equation*}
where $\chi_{R}(r) = \chi(r/R)$ for a fixed radial function $\chi \in C^{\infty}_{0}(\bbR^{4})$, $e^{M^{-1} \lap}$ is the Euclidean Fourier multiplier with symbol $e^{-\abs{\xi}^{2} / M}$, and we identify functions on $\bbH^{4}$ and $\bbR^{4}$ by using polar coordinates $(r, \omg)$ on both spaces as in Remark~\ref{r:r}.
\end{itemize}
\end{defn}

\begin{rem} 
Unlike the Bahouri-G\'erard profile decomposition theorem on $\bbR^{1+d}$, in our case different types of linear profiles need to be distinguished. The reason is that the action of the noncompact scaling group, which is still responsible for the lack of compactness of sequence of linear waves with bounded energy, is not an actual symmetry for \eqref{4d lin eq}. We refer to \cite{IPS, LOS2} for more discussion on this point.
\end{rem}

\begin{prop}[Linear profile decomposition] \label{prop:lin-prof}
Let $\vec{u}_{n}(0)$ be a uniformly bounded sequence of initial data in $H^{1} \times L^{2}$. After passing to a subsequence, there exist parameters $\set{t_{n, j}, \lmb_{n, j}} \subseteq \bbR \times [1, \infty)$ and the corresponding linear profiles $\set{\vec{U}_{\Box, n, L}^{j}}$ (where $n, j = 1, 2, \ldots$) such that the following holds.
\begin{enumerate}[leftmargin=*]
\item {\bf Linear profile decomposition.} \label{it:lin-prof:err}
Define the error $\vec{w}^{j}_{n} \in H^{1} \times L^{2}$ by
\begin{equation*}
	\vec{u}_{n}(0) = \sum_{1 \leq j < J} \vec{U}^{j}_{\Box, n, L}(0) + \vec{w}^{J}_{n},
\end{equation*}
and let $\vec{w}^{j}_{n}(t) = S(t) \vec{w}^{j}_{n}$. Then we have
\begin{equation*}
	\limsup_{n \to \infty} \nrm{\vec{w}^{J}_{n, L}}_{S(\bbR)} \to 0 \quad \hbox{ as } J \to \infty.
\end{equation*}

\item {\bf Weak convergence property.} \label{it:lin-prof:weak}
Let $1 \leq j < J$. If $\vec{U}_{\Box, n, L}^{j} = \vec{U}_{\hyp, n, L}^{j}$ is a hyperbolic linear profile, then as $n \to \infty$,
\begin{equation*}
	S(t_{n, j}) \vec{w}_{n}^{J} \rightharpoonup 0 \quad \hbox{ weakly in } H^{1} \times L^{2}.
\end{equation*}
If $\vec{U}_{\Box, n, L}^{j} = \vec{U}_{\euc, n, L}^{j}$ is a Euclidean linear profile, then as $n \to \infty$,
\begin{equation*}
	\bb( \lmb_{n, j}^{-1} \chi(\cdot / \lmb_{n, j}^{\frac{1}{2}}) (S_{0}(t_{n, j}) \vec{w}_{n}^{J})(\cdot / \lmb_{n, j}),
	\lmb_{n, j}^{-2} \chi(\cdot / \lmb_{n, j}^{\frac{1}{2}}) (S_{1}(t_{n, j}) \vec{w}_{n}^{J})(\cdot / \lmb_{n, j}) \bb) \rightharpoonup 0
\end{equation*}
weakly in $H^{1} \times L^{2}$.

\item {\bf Orthogonality of the parameters.} \label{it:lin-prof:orth}
For each $j \neq \ell$, the sequences $\set{t_{n, j}, \lmb_{n, j}}$ and $\set{t_{n, \ell}, \lmb_{n, \ell}}$ are \emph{asymptotically orthogonal} in the sense that either
\begin{equation*}
	\bb( \frac{\lmb_{n, j}}{\lmb_{n, \ell}} + \frac{\lmb_{n,\ell}}{\lmb_{n, j}} \to \infty\bb)
	\quad \hbox{ or } \quad
	\bb( \lmb_{n, j} = \lmb_{n, \ell} \hbox{ and }
	\frac{t_{n, j} - t_{n, \ell}}{\lmb_{n, j}} \to \infty \bb).
\end{equation*}

\item {\bf Pythagorean decomposition of the energy.}  \label{it:lin-prof:en}
For each $J \geq 1$, up to passing to a subsequence, we have
\begin{equation*}
	E(\vec{u}_{n}(0)) = \sum_{1 \leq j < J} E(\vec{U}^{j}_{\Box, n, L}(0)) + E(\vec{w}_{n}^{J}) + o_{n}(1).
\end{equation*}
\end{enumerate}
\end{prop}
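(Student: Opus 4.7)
The plan is to adapt the Bahouri-G\'erard framework \cite{BG} to the potential-perturbed hyperbolic wave equation \eqref{4d lin eq}, following closely the approach developed by the authors in \cite{LOS2} for a related class of wave equations on $\Hp^d$ and by Ionescu-Pausader-Staffilani \cite{IPS} for the Schr\"odinger equation on $\Hp^3$. The noncompactness of bounded sequences in $H^{1} \times L^{2}$ under the linear flow $S(t)$ arises from two distinct mechanisms: time translations and small-scale concentration. Since dilation is not a symmetry of \eqref{4d lin eq}, these mechanisms must be extracted separately, producing hyperbolic profiles (scale $\lmb_n = 1$) and Euclidean profiles (scale $\lmb_n \to \infty$), respectively.

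The core technical step is an \emph{inverse Strichartz estimate}: if $\vec u_n(0)$ is a bounded sequence in $H^1 \times L^2$ with non-vanishing $\limsup_n \nrm{S(t) \vec u_n(0)}_{S(\R)}$, then one can extract a nontrivial profile of one of the two allowed types. A frequency-localized decomposition distinguishes the two regimes: at frequencies of order one, weak-$\ast$ compactness in $H^1 \times L^2(\Hp^4)$ (after a suitable time translation) produces a hyperbolic limiting profile $\vec U_{\hyp}^j$; at frequencies $\lmb_n \to \infty$, the data are rescaled via a left inverse of $\calT_{\lmb_n}$ to an essentially Euclidean setting, yielding a Euclidean limiting profile $\vec V_{\euc}^j \in \dot H^1 \times L^2(\R^4)$. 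The cutoff $\chi_{\sqrt{\lmb_n}}$ and the heat regularization $e^{\lmb_n^{-1} \Delta}$ in the definition of $\calT_{\lmb_n}$ are precisely what is needed for this map to pair cleanly with weak-$\ast$ convergence on $\dot H^1 \times L^2(\R^4)$, as in Definition~\ref{def:lin-prof} and property (b). I expect this inverse Strichartz step to be the main technical obstacle, though it has been carried out essentially in the form needed here in \cite[Theorem~4.2]{LOS2}.

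Given the inverse Strichartz estimate, the extraction algorithm of \cite{BG} applies: choose at each stage a profile nearly realizing the asymptotic scattering norm of the current remainder, subtract it, and iterate. The uniform energy bound combined with the Pythagorean identity~(d) forces the extracted profile energies to be summable, so the remainder's scattering norm vanishes as $J \to \infty$, proving (a). The weak convergence property (b) holds by construction, as each profile is defined as a weak limit in the appropriate topology. The asymptotic orthogonality of parameters (c) is automatic: a non-orthogonal pair of parameter sequences would let two profiles be combined into a single larger one, contradicting the maximality of the extraction rule. Finally, (d) follows from (b) and (c) by the standard expansion of $E(\vec u_n(0))$ using the bilinear form and the decay of cross terms of profiles with asymptotically orthogonal parameters. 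The potential $U_\lmb(r) = (\cosh(2 P_\lmb)-1)/\sinh^2 r$ decays exponentially as $r \to \infty$ and is therefore a relatively compact perturbation of $-\Delta_{\Hp^4} - 2$; in particular, the Strichartz estimates for $S(t)$ reduce to those of the potential-free case established in \cite[Proposition~4.2]{LOS1}, and the rescaled potential tends to $0$ pointwise at Euclidean scales, so Euclidean profiles evolve by the free Euclidean wave propagator to leading order. This makes the presence of $U_\lmb$ a manageable perturbation rather than an essential complication.
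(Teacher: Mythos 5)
Your outline is correct and matches the paper's approach: the paper itself offers no independent argument here, simply invoking \cite[Theorem~4.2]{LOS2}, and the strategy you describe (Bahouri--G\'erard extraction via an inverse Strichartz estimate, with separate hyperbolic and Euclidean profiles because dilation is not a symmetry, following \cite{IPS, LOS2}) is precisely the proof of that cited general result. Your remarks on the role of $\calT_{\lmb_n}$, the orthogonality argument, and the potential $U_\lmb$ being a compact perturbation are all consistent with how \cite{LOS2} handles these points.
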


Proposition~\ref{prop:lin-prof} is a special case of the general result \cite[Theorem~4.2]{LOS2}.

Combining the perturbative tools (Propositions~\ref{prop:small-data} and \ref{prop:pert}) with the linear profile decomposition (Proposition~\ref{prop:lin-prof}), we may now establish a  \emph{nonlinear profile decomposition}, which approximates a sequence of nonlinear waves $\vec{u}_{n}(t)$ by superposition of nonlinear evolutions of the linear profiles, or \emph{nonlinear profiles}. We begin with a precise definition of this notion.
\begin{defn}[Nonlinear profiles] \label{def:nonlin-prof}
Let $\set{U_{\Box, n, L}}$ be a linear profile with associated parameters $\set{t_{n}, \lmb_{n}}$ as in Definition~\ref{def:lin-prof}. Assume that either
\begin{equation*}
	\lim_{n \to \infty} \lmb_{n} t_{n} = \pm \infty, 
	\quad \hbox{ or } \quad
	\lim_{n \to \infty} \lmb_{n} t_{n} = \tilde{t} \in \bbR.
\end{equation*}

\begin{itemize}[leftmargin=*]
\item {\it Case 1: A hyperbolic profile $(\Box = \hyp)$.}
We define the \emph{hyperbolic limiting nonlinear profile} $\vec{U}_{\hyp, \nl}(t)$ to be the unique solution to \eqref{4d eq} so that
\begin{equation*}
	\nrm{\vec{U}_{\hyp, n, L}(0) - \vec{U}_{\hyp, \nl}(-t_{n})}_{H^{1} \times L^{2}} \to 0 \quad \hbox{ as } n \to \infty.
\end{equation*}
The associated \emph{hyperbolic nonlinear profile} is defined to be the sequence
\begin{equation*}
	\vec{U}_{\hyp, n, \nl}(t) := \vec{U}_{\hyp, \nl}(t - t_{n}).
\end{equation*}
\item {\it Case 2: A Euclidean profile $(\Box = \euc)$.}
We define the \emph{Euclidean limiting nonlinear profile} $\vec{V}_{\euc, \nl}(t)$ to be the unique solution to the flat space equation
\begin{equation} \label{4d eq-euc}
v_{tt}-v_{rr}-\frac{3}{r} v_r =\NN_{\euc}(r, v), \quad \hbox{ where } 
\NN_{\euc}(r,v):=\frac{2 r \,v-\sinh(2 r \,v)}{2r^{3}}
\end{equation}
so that
\begin{equation*}
	\nrm{S_{\euc}(- \lmb_{n} t_{n}) \vec{V}_{\euc}(0) - \vec{V}_{\euc, \nl}(- \lmb_{n} t_{n})}_{\dot{H}^{1} \times L^{2} (\bbR^{4})}  \to 0
	\quad \hbox{ as } n \to \infty.
\end{equation*}
The associated \emph{Euclidean nonlinear profile} is defined to be the unique solution $\vec{U}_{\euc, n, \nl}(t)$ to \eqref{4d eq} with 
\begin{equation*}
	\vec{U}_{\euc, n, \nl}(t_{n}) = \calT_{\lmb_{n}} \vec{V}_{\euc, \nl}(0).
\end{equation*}
\end{itemize}
\end{defn}

The hyperbolic and Euclidean limiting nonlinear profiles $\vec{U}_{\hyp, \nl}$ and $\vec{V}_{\euc, \nl}$ are constructed by solving the respective initial value problem when $\lim_{n \to \infty} \lmb_{n} t_{n} = \tilde{t} \in \bbR$, and the scattering problem when $\lim_{n \to \infty} \lmb_{n} t_{n} = \pm \infty$.
By Proposition~\ref{p:struwe} (see also Remark~\ref{r:reg}), the solutions $\vec{U}_{\hyp, \nl}$, $\vec{U}_{\hyp, n, \nl}$ and $\vec{U}_{\euc, n, \nl}$ to \eqref{4d eq} exist globally on $\bbR \times \bbH^{4}$. By Struwe's original bubbling result \cite{Struwe}, $\vec{V}_{\euc, \nl}$ also exists globally on $\bbR \times \bbR^{4}$, and hence the above definition makes sense. 

In fact, for the Euclidean solution $\vec{V}_{\euc, \nl}$, scattering is known as well.
\begin{thm} \label{thm:euc-scat}
Let $\vec{v}(0) \in \dot{H}^{1} \times L^{2}(\bbR^{4})$, and $\vec{v}(t)$ be the corresponding solution to \eqref{4d eq-euc}. Then $\vec{v}(t)$ exists globally on $\bbR \times \bbR^{4}$ and scatters as $t \pm \infty$. More precisely, there exists a function $C : (0, \infty) \to (0, \infty)$ such that
\begin{equation*}
	\nrm{\vec{v}}_{L^{3}_{t}(\bbR; L^{6}_{x}(\bbR^{4}))} \leq C(\nrm{\vec{v}(0)}_{\dot{H}^{1} \times L^{2}(\bbR^{4})}).
\end{equation*}
\end{thm}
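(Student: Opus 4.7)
The plan is to reduce \eqref{4d eq-euc} to the radial equivariant wave maps equation from $\bbR \times \bbR^{2}$ into $\bbH^{2}$, and then appeal to the global regularity and large-data scattering theorems for such wave maps due to Krieger--Schlag, Tao, and Sterbenz--Tataru \cite{KS, ST1, ST2, Tao37}.

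First I would perform the change of variables $\varphi(t, r) := r\, v(t, r)$, the Euclidean analogue of the reduction $u = \varphi/\sinh r$ used on $\bbH^{4}$ above. A direct computation of $v_{tt}, v_{r}, v_{rr}$ in terms of $\varphi$ shows that $v$ solves \eqref{4d eq-euc} if and only if $\varphi$ solves the $2d$ equivariant wave map equation
\[
\varphi_{tt} - \varphi_{rr} - \frac{1}{r}\varphi_{r} + \frac{\sinh(2\varphi)}{2 r^{2}} = 0,
\]
the flat-space analogue of \eqref{wm}. An integration by parts, using $\varphi(0) = 0$ (forced by continuity of $v$ at the origin) and Sobolev-type pointwise bounds for radial $\dot H^{1}(\bbR^{4})$ functions, gives an energy equivalence between $\nrm{\vec v}_{\dot H^{1} \times L^{2}(\bbR^{4})}$ and the $2d$ wave maps energy of $\vec \varphi$. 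Invoking the scattering theorem for wave maps $\bbR^{1+2} \to \bbH^{2}$---every finite energy solution is global and converges in the energy topology to a free $2d$ wave, since $\bbH^{2}$ admits no non-trivial finite energy harmonic image of $\bbR^{2}$---and inverting $\varphi = rv$, we obtain a global solution $\vec v(t) \in C_{t}(\bbR; \dot H^{1} \times L^{2}(\bbR^{4}))$ to \eqref{4d eq-euc} that scatters in $\dot H^{1} \times L^{2}(\bbR^{4})$ to a free $4d$ radial wave $\vec v_{L}^{\pm}(t)$ as $t \to \pm \infty$.

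To upgrade energy-norm scattering to the quantitative Strichartz bound $\nrm{v}_{L^{3}_{t}L^{6}_{x}} \leq C(\nrm{\vec v(0)}_{\dot H^{1} \times L^{2}})$, I would run a Kenig--Merle concentration compactness argument for \eqref{4d eq-euc} by contradiction, using Euclidean analogues of the small data theory (Proposition~\ref{prop:small-data}) and perturbation lemma (Proposition~\ref{prop:pert}). If no such $C$ existed, a sequence $\vec v_{n}(0)$ with bounded energy but $\nrm{v_{n}}_{L^{3}L^{6}} \to \infty$ would admit a flat-space Bahouri--G\'erard profile decomposition in $\dot H^{1} \times L^{2}(\bbR^{4})$. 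Each nonlinear profile is global and energy-scatters by the preceding paragraph, hence has finite $L^{3}L^{6}$ norm, and asymptotic orthogonality of scales and time translations, combined with the Pythagorean decomposition of the energy, yields $\limsup_{n}\nrm{v_{n}}_{L^{3}L^{6}} < \infty$, a contradiction. The hard part will be bridging the qualitative energy-norm scattering statement from the $2d$ wave maps literature with the quantitative $L^{3}_{t}L^{6}_{x}$ bound needed here, and in particular verifying that the change of variables $\varphi = rv$ sends Cauchy data in $\dot H^{1} \times L^{2}(\bbR^{4})$ onto a class of $2d$ wave maps to which the scattering theorem applies; the absence of harmonic maps into $\bbH^{2}$ makes the rigidity step comparatively clean, as there is no soliton bubble to extract as a critical element.
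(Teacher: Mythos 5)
The paper does not actually prove Theorem~\ref{thm:euc-scat}: it is quoted from \cite{CKLS1}, with the remark that ``in this form'' it was established there. Your outline is nonetheless a faithful reconstruction of how such a statement is proved. The substitution $\varphi = r v$ does convert \eqref{4d eq-euc} into the flat equivariant wave map equation into $\Hp^2$ (the equation is introduced in the paper precisely as the vanishing-scale limit of \eqref{4d eq}), and the energy equivalence is controlled, as you say, by the radial pointwise bound $|v(r)| \lesssim r^{-1} \|v\|_{\dot H^{1}(\R^{4})}$, which keeps $\varphi$ bounded and hence $\sinh^{2}\varphi \lesssim \varphi^{2}$ with a constant depending only on the energy. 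The concentration-compactness upgrade from ``every finite-energy solution scatters'' to a uniform-in-energy $L^{3}_{t}L^{6}_{x}$ bound is also standard and correct.

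The one point to tighten is the division of labor between Struwe bubbling and the rigidity step. The absence of finite-energy harmonic maps $\R^{2} \to \Hp^{2}$ yields \emph{global regularity} via \cite{Struwe}, but it does not by itself yield scattering, and in the Kenig--Merle scheme the critical element is not a priori a harmonic map, so ``no soliton bubble to extract'' is not a rigidity argument. Either you genuinely import the qualitative scattering statement from \cite{Tao37, ST1, ST2} --- in which case those references, rather than the bubbling argument, carry the weight, and one must check that their notion of scattering for the map translates into $\dot H^{1} \times L^{2}(\R^{4})$ scattering of $v$ --- or you must supply an actual rigidity argument for the nonzero compact critical element, e.g.\ a virial/Morawetz estimate exploiting the favorable sign of $2 r v - \sinh(2 r v)$, which is exactly the Euclidean, $\lmb = 0$ analogue of the paper's Proposition~\ref{prop:morawetz}. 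With that step made explicit, your argument closes and agrees in substance with the proof in the cited reference.
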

In this form, Theorem~\ref{thm:euc-scat} was established in \cite{CKLS1}. Note that $L^{3}_{t} (I; L^{6}(\bbR^{4}))$ is the Euclidean analogue of our $S(I)$ norm.

The Euclidean equation \eqref{4d eq-euc} arises as the vanishing scale limit of the hyperbolic space equation \eqref{4d eq}. By making this link more precise, the behavior of the Euclidean solution $\vec{V}_{\euc, \nl}$ may be connected to the hyperbolic space solutions $\vec{U}_{\euc, n, \nl}$ as follows.
\begin{prop}[Nonlinear Euclidean approximation] \label{prop:euc-evol}
Given an initial data set $\vec{v}(0) = (f, g) \in \dot{H}^{1} \times L^{2} (\bbR^{4})$ and a sequence $\set{\lmb_{n}} \subseteq [1, \infty)$ such that $\lmb_{n} \to \infty$, consider the following three objects:
\begin{itemize}[leftmargin=*]
\item Let $\vec{v}(t) \in C_{t}(\bbR; \dot{H}^{1} \times L^{2}(\bbR^{4}))$ be the nonlinear Euclidean evolution of this data, i.e., $\vec{v}(t)$ solves \eqref{4d eq-euc} with initial data $\vec{v}(0)$.
\item Applying $\calT_{\lmb_{n}} \vec{v}$ and rescaling $t$, define the sequence
\begin{equation*}
	\vec{v}_{n}(t) := \calT_{\lmb_{n}} \vec{v} (\lmb_{n} t).
\end{equation*}
\item Let $\vec{u}_{n}(t) \in C_{t}(\bbR; H^{1} \times L^{2})$ be the nonlinear hyperbolic evolution of the data $\vec{u}_{n}(0) = \calT_{\lmb_{n}}(f, g)$, i.e., $\vec{u}_{n}$ is the solution to \eqref{4d eq} with initial data $\vec{u}_{n}(0)$.
\end{itemize}
Then the following holds.
\begin{enumerate}[leftmargin=*]
\item \label{it:euc-evol:fin-t}
For an arbitrary $T_{0} > 0$, denote by $I_{n}$ the interval $(-T_{0} / \lmb_{n}, T_{0}/ \lmb_{n})$. Then 
\begin{equation*}
	\nrm{\vec{u}_{n} - \vec{v}_{n}}_{L^{\infty}_{t}(I_{n}; H^{1} \times L^{2})}
	+ \nrm{u_{n} - v_{n}}_{S(I_{n})} \to 0 \quad \hbox{ as } n \to \infty.
\end{equation*}
\item \label{it:euc-evol:sc}
For $n$ large enough, $\vec{u}_{n}$ scatters as $t \to \infty$. Moreover, there exists a function $C : (0, \infty) \to (0, \infty)$ such that
\begin{equation*}
	\limsup_{n \to \infty} \nrm{u_{n}}_{S([0, \infty))} \leq C(\nrm{(f, g)}_{\dot{H}^{1} \times L^{2}(\bbR^{4})}).
\end{equation*}

\item \label{it:euc-evol:infin-t}
Denote by $\vec{v}_{\infty}$ the free scattering data for the nonlinear Euclidean evolution $\vec{v}(t)$, i.e.,
\begin{equation*}
	\nrm{\vec{v}(t) - S_{\euc}(t) \vec{v}_{\infty}}_{\dot{H}^{1} \times L^{2}(\bbR^{4})} \to 0 \quad \hbox{ as } t \to \infty.
\end{equation*}
Then we have
\begin{equation*}
	\limsup_{n \to \infty} \nrm{\vec{u}_{n}(t) - S(t) \calT_{\lmb_{n}} \vec{v}_{\infty}}_{L^{\infty}_{t}([T/\lmb_{n}, \infty); H^{1} \times L^{2})} \to 0 \quad \hbox{ as } T \to \infty.
\end{equation*}
An analogous statement holds in the negative time direction.
\end{enumerate}
\end{prop}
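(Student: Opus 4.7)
The plan is to apply the Perturbation Lemma (Proposition~\ref{prop:pert}) with $\vec{v}_n$ as the approximate solution to the hyperbolic equation \eqref{4d eq}, using the Euclidean scattering theorem (Theorem~\ref{thm:euc-scat}) to supply the global $S$-norm control needed for iteration. The key input is that $\glei(v_n) \to 0$ in $N(I)$ as $n \to \infty$ on any fixed time interval $I$ on which $\vec{v}(\lmb_n \cdot)$ is Strichartz-controlled.

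For part (a), since $\vec{u}_n(0) = \mathcal{T}_{\lmb_n}(f,g) = \vec{v}_n(0)$, the free-wave difference $w_L$ in Proposition~\ref{prop:pert} vanishes identically, so only $\glei(v_n)$ must be bounded. This error decomposes into (i) the radial Laplacian discrepancy $3(\coth r - r^{-1}) \rd_r v_n$, (ii) the zeroth-order contribution $(-2 + U_\lmb(r)) v_n$, (iii) the nonlinearity discrepancy $\mathcal{N}(r, v_n) - \mathcal{N}_\euc(r, v_n)$, and (iv) a commutator of the wave operator with the smoothing/cutoff $\chi_{\sqrt{\lmb_n}} e^{\lmb_n^{-1}\lap}$. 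On the support $r \lec 1/\sqrt{\lmb_n}$ of $v_n$, the prefactors in (i)--(iii) carry Taylor-expansion gains of size $O(r)$ or smaller ($\coth r - r^{-1} = O(r)$, $U_\lmb = O(1)$, $\sinh^{-2} r - r^{-2} = O(1)$), which after rescaling and integration over the short interval $I_n$ of length $O(T_0/\lmb_n)$ deliver an $N(I_n)$ contribution of size $o_n(1)$. Piece (iv) is $o_n(1)$ because the smoothing/cutoff converges strongly to the identity on $\dot H^1 \times L^2(\bbR^4)$.

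For part (b), Theorem~\ref{thm:euc-scat} supplies a uniform bound on $\|v\|_{L^3_t L^6_x(\bbR^4)}$, hence on $\|v_n\|_{S(\bbR)}$ by scale invariance of this norm (and the equivalence $\sinh r \sim r$ on the cutoff support). Partitioning $[0, \infty)$ into finitely many subintervals on which $\|v_n\|_{S}$ is smaller than $\eps_0(A)$ from Proposition~\ref{prop:pert}, and iterating the perturbation estimate (with a Strichartz-based extension of the bound on $\glei(v_n)$), yields a uniform bound on $\|u_n\|_{S([0, \infty))}$ and the scattering of $\vec{u}_n$. Part (c) then follows by chaining three asymptotic identifications: $\vec{u}_n(t) \approx \vec{v}_n(t)$ from the global comparison in (b), $\vec{v}_n(t) = \mathcal{T}_{\lmb_n} \vec{v}(\lmb_n t) \approx \mathcal{T}_{\lmb_n} S_\euc(\lmb_n t) \vec{v}_\infty$ from Euclidean scattering once $\lmb_n t \geq T$ is large, and $\mathcal{T}_{\lmb_n} S_\euc(\lmb_n t) \vec{v}_\infty \approx S(t) \mathcal{T}_{\lmb_n} \vec{v}_\infty$ from the linear analog of part (a).

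The most delicate step will be extending the error estimate for $\glei(v_n)$ past the small interval $I_n$ to the global scattering intervals of part (b). On such intervals the Euclidean solution $v$ disperses, so $v_n$ is no longer localized at scale $1/\sqrt{\lmb_n}$ and the pointwise Taylor expansions used above must be replaced by a quantitative Strichartz argument in the mixed $L^1_t L^2 + L^{3/2}_t L^{12/7}$ norm $N$. The further subtlety introduced by the amplification factor $\sinh r$ inside $\mathcal{N}(r, u)$ is handled by the same cutoff localization together with the scale invariance of the $L^3_t L^6_x$ norm.
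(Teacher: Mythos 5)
Your overall architecture --- the perturbation lemma with $\vec{v}_{n}$ as the approximate solution, Theorem~\ref{thm:euc-scat} for global control, and the linear Euclidean approximation theory for the cutoff commutator and for part (c) --- is precisely the set of ingredients the paper points to (it defers the details to the proof of Proposition~6.4 in \cite{LOS2}), and your treatment of part (a) is sound: on $I_{n}$ one has $w_{L} \equiv 0$, and every discrepancy term in $\glei(v_{n})$ gains either a power of $r \aleq \lmb_{n}^{-1/2}$ from the support of $\chi_{\sqrt{\lmb_{n}}}$ or a power of $\abs{I_{n}} \aleq \lmb_{n}^{-1}$ from the short time of integration.

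There is, however, a genuine gap in your part (b). You propose to bound $\nrm{\glei(v_{n})}_{N(I)}$ on the pieces of a partition of $[0,\infty)$, but the discrepancy terms $(-2 + U_{\lmb}) v_{n}$ and $3(\coth r - r^{-1}) \rd_{r} v_{n}$ are only $O(\lmb_{n}^{-1/2})$ in $L^{2}_{x}$ \emph{uniformly in $t$}: the gain comes entirely from the spatial localization $r \aleq \lmb_{n}^{-1/2}$, and there is no time decay since the Euclidean energy of $\vec{v}(\lmb_{n} t)$ is conserved. Consequently their $L^{1}_{t}L^{2}_{x}$ (or $L^{3/2}_{t}L^{12/7}_{x}$) norms are $\aleq T_{1}\lmb_{n}^{-1/2}$ on a compact interval $[0, T_{1}]$ --- fine --- but diverge on the unbounded final subinterval of any partition; no ``Strichartz-based extension'' of the $\glei(v_{n})$ bound can repair this. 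The correct mechanism for $t \geq T/\lmb_{n}$ abandons the comparison with $v_{n}$ altogether: by Theorem~\ref{thm:euc-scat}, $\vec{v}(\lmb_{n}t) \approx S_{\euc}(\lmb_{n}t)\vec{v}_{\infty}$ once $\lmb_{n} t \geq T$ with $T$ large; by the linear approximation theory of \cite{LOS2}, $S(t)\calT_{\lmb_{n}}\vec{v}_{\infty} \approx \calT_{\lmb_{n}} S_{\euc}(\lmb_{n}t)\vec{v}_{\infty}$ in energy and in $S$; and $\nrm{S_{\euc}(\cdot)\vec{v}_{\infty}}_{L^{3}_{t}L^{6}_{x}([T,\infty)\times\bbR^{4})} \to 0$ as $T \to \infty$. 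Hence the free hyperbolic wave launched from $\vec{u}_{n}(T/\lmb_{n})$ has small $S([T/\lmb_{n},\infty))$ norm, and one closes with Proposition~\ref{prop:pert} applied with $\vec{v} \equiv 0$, i.e., in the regime where the smallness comes from $w_{L}$ rather than from $\glei$. This yields (b) and (c) simultaneously; note that your part (c) as written leans on the ``global comparison in (b)'' and therefore inherits the same gap.
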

Proposition~\ref{prop:euc-evol} may be proved by an argument similar to \cite[Proof of Proposition~6.4]{LOS2}, whose key ingredients are the perturbation lemma (Proposition~\ref{prop:pert}), the Euclidean scattering theorem (Theorem~\ref{thm:euc-scat}) and the linear Euclidean approximation theory developed in \cite[Section~3.3]{LOS2}. We omit the details.

For hyperbolic nonlinear profiles ($\Box = \hyp$) in Definition~\ref{def:nonlin-prof}, we obviously have
\begin{equation} \label{eq:nonlin-prof-id}
	\nrm{\vec{U}_{\Box, n, L}(-\lmb_{n} t_{n}) - \vec{U}_{\Box, n, \nl}(0)}_{H^{1} \times L^{2}} \to 0.
\end{equation}
An important consequence of Proposition~\ref{prop:euc-evol}.(\ref{it:euc-evol:fin-t}) and (\ref{it:euc-evol:infin-t}) is that \eqref{eq:nonlin-prof-id} holds for Euclidean nonlinear profiles ($\Box = \euc$) as well.
Moreover, by Proposition~\ref{prop:euc-evol}.(\ref{it:euc-evol:sc}), every Euclidean nonlinear profile $\vec{U}_{\euc, n, \nl}$ scatters and obeys
\begin{equation*}
	\limsup_{n \to \infty} \nrm{U_{\euc, n, \nl}}_{S(\bbR)} \leq C(\nrm{\vec{V}_{\euc}(0)}_{\dot{H}^{1} \times L^{2}(\bbR^{4})}) < \infty.
\end{equation*}
We are now ready to state the nonlinear profile decomposition for \eqref{4d eq}.

\begin{prop}[Nonlinear profile decomposition] \label{prop:nonlin-prof}
Consider a uniformly bounded sequence of initial data $\vec{u}_{n}(0)$ in $H^{1} \times L^{2}$. Let $\vec{U}^{j}_{\Box, n, L}$ be linear profiles for a subsequence of $\vec{u}_{n}(0)$ with associated parameters $\set{t_{n, j}, \lmb_{n, j}}$, which obey the properties stated in Proposition~\ref{prop:lin-prof}. Passing to a further subsequence if necessary, we may associate a nonlinear profile $\vec{U}_{\Box, n, \nl}^{j}$ to each $\vec{U}_{\Box, n, L}^{j}$. Denote by $\vec{u}_{n}(t)$ the unique solution to \eqref{4d eq} with $\vec{u}_{n}(0)$ as the initial data.

Let $\set{s_{n}}$ be any sequence of times so that for each $j \geq 1$ corresponding to $\Box = \hyp$, we have 
\begin{equation*}
	\limsup_{n \to \infty} \nrm{U^{j}_{\Box, \nl}}_{S([-t_{n,j}, s_{n} - t_{n,j}))} < \infty.
\end{equation*}
Then the following holds.
\begin{enumerate}[leftmargin=*]
\item {\bf Scattering norm bound for $\vec{u}_{n}(t)$.} \label{it:nonlin-scat:sc}
We have
\begin{equation*}
	\limsup_{n \to \infty} \nrm{u_{n}}_{S([0, s_{n}))} < \infty.
\end{equation*}
\item {\bf Nonlinear profile decomposition.} \label{it:nonlin-scat:err}
The error $\vec{\gmm}_{n}^{j}(t) \in C_{t}([0, s_{n}); H^{1} \times L^{2})$ defined by
\begin{equation*}
	\vec{u}_{n}(t) = \sum_{1 \leq j < J} \vec{U}_{\Box, n, \nl}^{j}(t) + \vec{w}_{n, L}^{J}(t) + \vec{\gmm}_{n}^{J}(t)
\end{equation*}
obeys
\begin{equation*}
	\limsup_{n \to \infty} \bb( \nrm{\vec{\gmm}_{n}^{J}}_{L^{\infty}_{t}([0, s_{n}); H^{1} \times L^{2})} + \nrm{\vec{\gmm}_{n}^{J}}_{S([0, s_{n}))}\bb) \to 0 \quad \hbox{ as } J \to \infty.
\end{equation*}

\item {\bf Asymptotic Pythagorean decomposition of the linear energy.} \label{it:nonlin-prof:en}
Let $\set{\tau_{n}}$ be any sequence of times such that $\tau_{n} \in [0, s_{n})$. Then up to passing to a subsequence, we have
\begin{equation*}
E(\vec{u}_{n}(\tau_{n})) = \sum_{1 \leq j < J} E(\vec{U}^{j}_{\Box, n, \nl}(\tau_{n})) + E(\vec{w}_{n, L}^{J}(\tau_{n})) + o_{n}(1).
\end{equation*}
\end{enumerate}
\end{prop}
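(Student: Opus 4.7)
The plan is to follow the standard Kenig-Merle concentration-compactness strategy: build an approximate solution by superposing the nonlinear profiles plus the dispersive remainder, control it using the bookkeeping provided by Proposition~\ref{prop:lin-prof} and the Euclidean approximation Proposition~\ref{prop:euc-evol}, and then promote the approximation to an actual statement about $\vec{u}_{n}$ via the perturbation lemma (Proposition~\ref{prop:pert}). Concretely, define
\begin{equation*}
	\vec{u}_{n}^{\mathrm{app}, J}(t) := \sum_{1 \leq j < J} \vec{U}_{\Box, n, \nl}^{j}(t) + \vec{w}_{n, L}^{J}(t),
\end{equation*}
so that $\vec{u}_{n}^{\mathrm{app}, J}(0)$ and $\vec{u}_{n}(0)$ differ by $\sum_{j < J}(\vec{U}^{j}_{\Box, n, L}(0) - \vec{U}^{j}_{\Box, n, \nl}(0))$, which tends to zero in $H^{1} \times L^{2}$ by Definition~\ref{def:nonlin-prof} in the hyperbolic case and by \eqref{eq:nonlin-prof-id} (a consequence of Proposition~\ref{prop:euc-evol}.(\ref{it:euc-evol:fin-t}) and (\ref{it:euc-evol:infin-t})) in the Euclidean case.

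First I would establish a uniform scattering bound on $\vec{u}_{n}^{\mathrm{app}, J}$ on $[0, s_{n})$. For the hyperbolic profiles the hypothesis on $s_{n}$ gives uniform $S$-bounds on $[-t_{n,j}, s_{n} - t_{n, j})$; for the Euclidean profiles, Proposition~\ref{prop:euc-evol}.(\ref{it:euc-evol:sc}) produces a uniform bound depending only on the limiting profile's $\dot{H}^{1} \times L^{2}(\bbR^{4})$ norm; and the error term $w_{n, L}^{J}$ has $S(\bbR)$ norm $o_{J \to \infty}(1)$ by Proposition~\ref{prop:lin-prof}.(\ref{it:lin-prof:err}). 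Using the asymptotic orthogonality of parameters in Proposition~\ref{prop:lin-prof}.(\ref{it:lin-prof:orth}), mixed space-time integrals $\int \abs{U_{\Box, n, \nl}^{j}}^{a} \abs{U_{\Box, n, \nl}^{\ell}}^{b}$ with $j \neq \ell$ vanish in the limit $n \to \infty$, so the $S$ norm of the sum is comparable to the $\ell^{3}$ sum of the $S$ norms of the individual profiles; combined with the Pythagorean decomposition of the energy in Proposition~\ref{prop:lin-prof}.(\ref{it:lin-prof:en}) (which bounds the number of profiles with appreciable energy), this furnishes the required uniform bound on $\nrm{u_{n}^{\mathrm{app}, J}}_{S([0, s_{n}))}$.

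Next I would compute $\glei(\vec{u}_{n}^{\mathrm{app}, J})$ defined in \eqref{eq:glei}. Since each hyperbolic nonlinear profile exactly solves \eqref{4d eq} and $w_{n, L}^{J}$ solves \eqref{4d lin eq}, the error reduces to
\begin{equation*}
	\glei(\vec{u}_{n}^{\mathrm{app}, J}) = \sum_{j < J} \glei(\vec{U}_{\euc, n, \nl}^{j}) + \bigl[ \calN(\vec{u}_{n}^{\mathrm{app}, J}) - \sum_{j < J} \calN(\vec{U}_{\Box, n, \nl}^{j}) \bigr],
\end{equation*}
up to linear terms involving $U_{\lmb}(r) u$ acting on the Euclidean profiles, which are $o_{n}(1)$ in $N$ by concentration at scale $\lmb_{n}^{-1} \to 0$ and the fact that $U_{\lmb}$ is a bounded potential. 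The first sum is controlled via Proposition~\ref{prop:euc-evol}.(\ref{it:euc-evol:fin-t}) applied on $\abs{t - t_{n,j}} \lesssim T_{0}/\lmb_{n, j}$ and the Strichartz/scattering bounds for $\vec{V}_{\euc, \nl}^{j}$ outside this window. The bracketed cross-term is the main obstacle: using analyticity of $\calN$ to Taylor expand and the asymptotic orthogonality of the parameters (Proposition~\ref{prop:lin-prof}.(\ref{it:lin-prof:orth})), one shows each product term $\abs{U_{\Box, n, \nl}^{j}}^{a} \abs{U_{\Box, n, \nl}^{\ell}}^{b}$ with $j \neq \ell$ goes to zero in $N([0, s_{n}))$ as $n \to \infty$, while products with $w_{n,L}^{J}$ can be made $o(1)$ thanks to $\nrm{w_{n,L}^{J}}_{S(\bbR)} = o_{J \to \infty}(1)$.

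Having shown $\nrm{\glei(\vec{u}_{n}^{\mathrm{app}, J})}_{N([0, s_{n}))} \to 0$ as $n \to \infty$ (for each fixed $J$, up to a term $o_{J}(1)$), I would apply Proposition~\ref{prop:pert} with $\vec{v} = \vec{u}_{n}^{\mathrm{app}, J}$, $\vec{u} = \vec{u}_{n}$, $t_{0} = 0$, and $w_{L}$ the free evolution of their initial-data difference (which tends to zero in $S(\bbR) \cap L^{\infty}_{t} H^{1} \times L^{2}$ as $n \to \infty$). This simultaneously yields items (\ref{it:nonlin-scat:sc}) and (\ref{it:nonlin-scat:err}) of the proposition. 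For the asymptotic Pythagorean decomposition of the linear energy in item (\ref{it:nonlin-prof:en}), I would combine the just-established approximation at time $\tau_{n}$, which identifies $\vec{u}_{n}(\tau_{n})$ up to an $H^{1} \times L^{2}$-small error with $\sum_{j < J} \vec{U}_{\Box, n, \nl}^{j}(\tau_{n}) + \vec{w}_{n, L}^{J}(\tau_{n})$, with the Pythagorean decomposition of the linear energy $E$: orthogonality of the parameters is preserved under the linear evolution $S(t)$, so the cross inner products in $E(\cdot)$ vanish in the limit, exactly as in Proposition~\ref{prop:lin-prof}.(\ref{it:lin-prof:en}) but applied at $\tau_{n}$ instead of $0$. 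The hardest piece is the orthogonality-based decoupling of the nonlinear interactions in Step 3, which is where the analyticity of $\calN$ and the precise interplay of concentration scales $\lmb_{n, j}$ with translation parameters $t_{n,j}$ both come into play.
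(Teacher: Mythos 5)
Your proposal is correct and follows essentially the same route as the paper, which itself only sketches the argument (deferring details to \cite{LOS2}): apply the perturbation lemma (Proposition~\ref{prop:pert}) to the superposition of nonlinear profiles, using \eqref{eq:nonlin-prof-id}, the orthogonality of parameters, and the smallness of $\nrm{w^{J}_{n,L}}_{S(\bbR)}$ to verify the smallness hypotheses, and deduce the energy decoupling at time $\tau_{n}$ from the weak convergence and orthogonality properties. One minor remark: by Definition~\ref{def:nonlin-prof} the Euclidean nonlinear profiles $\vec{U}^{j}_{\euc, n, \nl}$ are \emph{exact} solutions of \eqref{4d eq} (the Euclidean equation enters only through their construction), so the terms $\glei(\vec{U}^{j}_{\euc, n, \nl})$ in your error computation vanish identically and the only quantity to control is the nonlinear cross-interaction.
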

Parts~(\ref{it:nonlin-scat:sc}) and (\ref{it:nonlin-scat:err}) of Proposition~\ref{prop:nonlin-prof} follow by applying Proposition~\ref{prop:pert} (perturbation lemma) with $\vec{u} = \vec{u}_{n}$ and $\vec{v} = \sum_{1 \leq j < J} \vec{U}_{\Box, n, \nl}^{j}$ for large $n$ and $J$, where Proposition~\ref{prop:lin-prof} and \eqref{eq:nonlin-prof-id} are used to ensure the necessary smallness condition. 
The details are similar to that of \cite[Proof of Theorem 7.2]{LOS2}.
Part~(\ref{it:nonlin-prof:en}) is a consequence of Proposition~\ref{prop:lin-prof}.(\ref{it:lin-prof:weak}) (weak convergence property) and (\ref{it:lin-prof:orth}) (orthogonality of parameters); see \cite[Proof of (8.16)]{LOS2}.

\begin{proof} [Sketch of proof of Proposition~\ref{prop:crit-elt}]
Let $0 \leq \lmb < 1$. Define
\begin{equation*}
	\calB(A) := \set{\vec{u}(0) \in H^{1} \times L^{2} : \sup_{t \in [0, \infty)} E(\vec{u}(t)) \leq A, \ \vec{u}(t) \hbox{ solves \eqref{4d eq}}}
\end{equation*}
Abusing the terminology a bit, we will refer to the norm $\sup_{t \in [0, \infty)} E(\vec{u}(t))$ as the \emph{linear energy} on $[0 ,\infty)$ of the nonlinear solution $\vec{u}(t)$ to \eqref{4d eq}. We say that $\calS \calC(A)$ holds if for all $\vec{u}(0) \in \calB(A)$ the corresponding nonlinear solution $\vec{u}(t)$ scatters forward in time, i.e., $\nrm{u}_{S([0, \infty))} < \infty$. 

Let $A_{C}$ be the infimum of all $A \in [0 ,\infty]$ for which $\calS \calC(A)$ fails. By hypothesis and Proposition~\ref{prop:small-data}, $\calS \calC(A)$ fails for some $A > 0$; hence $A_{C}$ is well-defined and finite. On the other hand, by Proposition~\ref{prop:small-data} (small data theory), $A_{C} > 0$. The key step of the proof is to prove the following compactness property for a sequence of forward-in-time nonscattering solutions whose linear energy on $[0, \infty)$ tends to~$A_{C}$.

\begin{prop}[Compactness of minimizing sequence] \label{prop:en-mini}
Let $\vec{u}_{n}(t)$ be a sequence of solutions to \eqref{4d eq} in $\calB(A)$ for some $A \in (0, \infty)$, which satisfies
\begin{equation*}
	\nrm{u}_{S([0, \infty))} = \infty \hbox{ for each } n, 
	\quad \hbox{ and } \quad
	\sup_{t \in [0, \infty) }E(\vec{u}_{n}(t)) \to A_{C}
	\hbox{ as } n \to \infty.
\end{equation*}
Then after passing to a subsequence, $\vec{u}_{n}(0)$ admits a linear profile decomposition (cf. Proposition~\ref{prop:lin-prof}) with only one nonzero profile, which is necessarily hyperbolic; we denote this profile by $\vec{U}_{\hyp, n, L}$ and the associated time parameters by $t_{n}$ (the frequency parameters $\lmb_{n}$ are all equal to $1$). Moreover, the error $\vec{w}_{n} \in H^{1} \times L^{2}$ defined by
\begin{equation*}
	\vec{u}_{n}(0) = \vec{U}_{\hyp, n, L}(0) + \vec{w}_{n}
\end{equation*}
vanishes in $H^{1} \times L^{2}$, i.e.,
\begin{equation*}
	\lim_{n \to \infty} \nrm{\vec{w}_{n}}_{H^{1} \times L^{2}} = 0.
\end{equation*}
\end{prop}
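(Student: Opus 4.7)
The plan is to follow the standard Kenig--Merle concentration compactness reduction. Pass to a subsequence and apply the linear profile decomposition (Proposition~\ref{prop:lin-prof}) to $\vec{u}_{n}(0)$, associate nonlinear profiles via Definition~\ref{def:nonlin-prof}, and show that every scenario other than exactly one hyperbolic profile with vanishing remainder produces $\limsup_{n} \|u_{n}\|_{S([0, \infty))} < \infty$ via Proposition~\ref{prop:nonlin-prof}(a), contradicting the non-scattering hypothesis. Set $E^{\ast}_{j} := \lim_{n} E(\vec{U}^{j}_{\Box, n, L}(0))$ and $\gamma := \lim_{J \to \infty} \limsup_{n} E(\vec{w}^{J}_{n})$. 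The Pythagorean identity (Proposition~\ref{prop:lin-prof}(d)) together with $\sup_{\tau} E(\vec{u}_{n}(\tau)) \to A_{C}$ gives $\sum_{j} E^{\ast}_{j} + \gamma \leq A_{C}$. Every Euclidean nonlinear profile scatters forward and backward with a uniform $S$-bound by Proposition~\ref{prop:euc-evol}(b), so if no hyperbolic profile with $E^{\ast}_{j} > 0$ were present, Proposition~\ref{prop:nonlin-prof}(a) (applied with $s_{n} = \infty$, with the hyperbolic condition being vacuous) would already give a contradiction; hence some hyperbolic profile $\vec{U}^{j_{0}}_{\hyp, n, L}$ carries positive asymptotic energy.

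Next, suppose either there is a second profile with $E^{\ast}_{j_{1}} > 0$ or $\gamma > 0$. Choose $\tau_{n} \in [0, \infty)$ so that $E(\vec{U}^{j_{0}}_{\hyp, n, \nl}(\tau_{n})) \to \sup_{\tau} E(\vec{U}^{j_{0}}_{\hyp, n, \nl}(\tau))$, and apply the nonlinear energy Pythagoras (Proposition~\ref{prop:nonlin-prof}(c)) at $\tau_{n}$:
\begin{equation*}
	A_{C} + o_{n}(1) \geq E(\vec{u}_{n}(\tau_{n})) = \sum_{j} E(\vec{U}^{j}_{\Box, n, \nl}(\tau_{n})) + E(\vec{w}^{J}_{n, L}(\tau_{n})) + o_{n}(1).
\end{equation*}
Since linear energy is conserved along $\vec{w}^{J}_{n, L}$, $E(\vec{w}^{J}_{n, L}(\tau_{n})) = E(\vec{w}^{J}_{n}) \to \gamma_{J}$ with $\gamma_{J} \to \gamma$; and the asymptotic orthogonality of parameters (Proposition~\ref{prop:lin-prof}(c)) together with the Euclidean approximation (Proposition~\ref{prop:euc-evol}) ensure that the linear energy of each other nontrivial nonlinear profile at $\tau_{n}$ remains close to $E^{\ast}_{j}$ modulo $o_{n}(1) + o_{J}(1)$ errors. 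Consequently
\begin{equation*}
	\sup_{\tau \in [0, \infty)} E(\vec{U}^{j_{0}}_{\hyp, n, \nl}(\tau)) \leq A_{C} - \sum_{j \neq j_{0}} E^{\ast}_{j} - \gamma + o_{n}(1) + o_{J}(1),
\end{equation*}
which is strictly below $A_{C}$ by a definite margin $\delta > 0$ for $n$ and $J$ large. By the minimality of $A_{C}$, $\vec{U}^{j_{0}}_{\hyp, n, \nl}$ scatters with uniformly bounded $S$-norm; the same argument applies to any other hyperbolic profile with $E^{\ast}_{j} > 0$. Combined with the uniform scattering of Euclidean profiles, Proposition~\ref{prop:nonlin-prof}(a) yields $\limsup_{n} \|u_{n}\|_{S([0, \infty))} < \infty$, contradiction.

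Hence there is exactly one nontrivial profile, necessarily hyperbolic, and $\gamma = 0$. Absorbing the zero-energy profiles ($E^{\ast}_{j} = 0$ for $j \neq j_{0}$) into the remainder and invoking the equivalence $E \sim \|\cdot\|^{2}_{H^{1} \times L^{2}}$ converts $\gamma = 0$ into $\|\vec{w}_{n}\|_{H^{1} \times L^{2}} \to 0$, which is the desired conclusion.

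The main technical obstacle is the subcriticality bound: because the quadratic linear energy $E$ is not conserved by the nonlinear flow~\eqref{4d eq}, the sup-in-time of $E$ along a nonlinear profile cannot be controlled by its initial value $E^{\ast}_{j}$ purely from the $\tau = 0$ Pythagoras. The delicate step is to show that, at the near-peak time $\tau_{n}$ of $\vec{U}^{j_{0}}_{\hyp, n, \nl}$, the \emph{other} profiles and the remainder still contribute a definite positive fraction to the Pythagorean identity, which rests on the orthogonality of parameters together with the near-conservation of linear energy along Euclidean nonlinear profiles provided by Proposition~\ref{prop:euc-evol}.
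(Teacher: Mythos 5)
Your proposal is correct and follows essentially the same route as the paper's own proof, which is itself only a brief sketch deferring to \cite[Proof of Proposition~8.2]{LOS2}: extract linear and nonlinear profiles, observe that Euclidean profiles always scatter with uniform bounds, and use the Pythagorean expansion of the linear energy at well-chosen times $\tau_{n}$ together with the minimality of $A_{C}$ to rule out a second nontrivial profile or a non-vanishing remainder. The only detail to nail down in a full write-up is the admissibility of the near-peak times $\tau_{n}$ in Proposition~\ref{prop:nonlin-prof}.(c) --- since one cannot presuppose that the nonscattering hyperbolic profile has finite $S$-norm up to $\tau_{n}$, one should first run the Pythagorean argument on compact windows $[0,T]$ (where the condition on $s_{n}$ holds automatically because the limiting profiles are global) and then let $T \to \infty$.
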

The proof of Proposition~\ref{prop:en-mini} begins with an application of Proposition~\ref{prop:lin-prof} and \ref{prop:nonlin-prof}. If there are more than one nonscattering nonlinear profiles, then $\nrm{\vec{u}_{n}}_{S([0, \infty))} < \infty$ for large $n$ (up to taking a subsequence). Hence there is exactly one nonscattering nonlinear limiting profile, and by Proposition~\ref{prop:euc-evol}, it is necessarily hyperbolic. Finally, by Proposition~\ref{prop:nonlin-prof}.(\ref{it:lin-prof:en}), the energy minimizing property of $\vec{u}_{n}$ and Proposition~\ref{prop:small-data} (small data theory), vanishing of all other profiles and $\vec{w}^{J}_{n}$ in $L^{\infty}_{t}([0, \infty); H^{1} \times L^{2})$ follows. For details, we refer to \cite[Proof of Proposition~8.2]{LOS2}. 

With Proposition~\ref{prop:en-mini} in our hand, the proof of Proposition~\ref{prop:crit-elt} may be completed in a few strokes.

 \vskip.5em
\noindent {\bf Step 1: Existence of a critical element.} 
Consider a sequence of the initial data sets for forward-in-time nonscattering solutions $\vec{u}_{n}(t)$ that minimizes the linear energy on $[0, \infty)$. We define the \emph{critical element} $\vec{u}_{\ast}(t)$ to be the nonlinear limiting profile associated to the single nonzero hyperbolic linear profile given by Proposition~\ref{prop:en-mini} applied to this sequence. By Proposition~\ref{prop:pert} and the hypotheses on $\vec{u}_{n}(t)$, for every $\tau \geq 0$ the critical element $\vec{u}_{\ast}(t)$ obeys
\begin{equation} \label{eq:crit-elt-prop}
	\nrm{u_{\ast}}_{S([\tau, \infty)} = \infty, \quad \hbox{ and } \quad
	\sup_{t \in [\tau, \infty)} E(\vec{u}_{\ast}(t)) = A_{C}. 
\end{equation}

\vskip.5em
\noindent {\bf Step 2: Compact trajectory property.}
To complete the proof of Proposition~\ref{prop:crit-elt}, it only remains to verify that the forward trajectory $K_{+}$ of $\vec{u}_{\ast}$ is pre-compact in $H^{1} \times L^{2}$, or equivalently, that any sequence $\set{\vec{u}_{\ast}(\tau_{n}) } \subseteq K_{+}$ contains a convergent subsequence. 

We may assume that $\tau_{n} \to \infty$, since the statement obviously
 holds otherwise by continuity of the trajectory $t \mapsto \vec{u}_{\ast}(t)$. Then, we may apply Proposition~\ref{prop:en-mini} to $\vec{u}_{\ast}(\tau_{n})$ thanks to \eqref{eq:crit-elt-prop}.  Passing to a subsequence and arguing as before, 
\begin{equation*}
	\vec{u}_{\ast}(\tau_{n}) = \vec{U}_{\hyp, L}(-t_{n}) + \vec{w}_{n},
\end{equation*}
where $\nrm{\vec{w}_{n}}_{H^{1} \times L^{2}} \to 0$. We claim that $\limsup_{n \to \infty} \abs{t_{n}} < \infty$; then passing to a subsequence so that $t_{n} \to t_{0} \in \bbR$, $\vec{u}_{\ast}(\tau_{n})$ converges to $\vec{U}_{\hyp, L}(t_{0})$, thereby establishing the desired conclusion.

Suppose that the claim fails; then on a subsequence, we have $t_{n} \to -\infty$ or $t_{n} \to \infty$.
In the former case, $\nrm{S(t) \vec{u}_{\ast}(\tau_{n})}_{S([0, \infty))} \to 0$, but this implies by Proposition~\ref{prop:small-data} that $\nrm{u_{\ast}}_{S([0, \infty))} < \infty$, which is impossible. 
In the latter case, $\nrm{U_{\hyp, L}(t-t_{n})}_{S((-\infty, 0])} \to 0$, and hence by Proposition~\ref{prop:pert}, $\nrm{u_{\ast}}_{S((-\infty, \tau_{n}])}$ is uniformly bounded in $n$. Since $\tau_{n} \to \infty$, we again deduce $\nrm{u_{\ast}}_{S([0, \infty))} < \infty$, which is a contradiction. \qedhere
\end{proof}

\subsection{Morawetz estimate} \label{s:mor} 
In this subsection, we set the stage for the second step of the Kenig-Merle approach, by establishing a Morawetz-type estimate for finite energy wave maps. 
\begin{prop} \label{prop:morawetz}
There exists $\Lmb \geq 1/2$ such that the following holds.
Let $\lmb \in [0, \Lmb]$ and let $\vec{\psi}(t) \in C_{t}([0, \infty); \calE_{\lmb})$ be an equivariant wave map with finite energy $\calE$. Denote by $\vec{u}(t) = \sinh^{-1} r \, [\vec{\psi}(t) - (P_{\lmb}(r), 0)]$ the corresponding $4d$ variable. Then
\begin{equation} \label{eq:morawetz}
	\int_{I} \int_{0}^{\infty} \abs{u}^{4} \sinh^{3} r \, \ud r \, \ud t \leq C \calE.
\end{equation}
\end{prop}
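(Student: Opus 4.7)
The plan is to apply the Morawetz multiplier method to the 4d reformulation \eqref{4d eq}. Multiply the equation by a radial multiplier of the form $\mathcal{M}u := h(r)\, u_r + k(r)\, u$, with $k := \tfrac{1}{2}(h' + 3h\coth r)$, so that $\mathcal{M}$ corresponds to the standard multiplier $Xu + \tfrac{1}{2}(\operatorname{div}_{\mathbb{H}^4} X)u$ for the radial vector field $X = h(r)\partial_r$, and integrate over $[0,T]\times\mathbb{H}^4$ against $\sinh^3 r\, dr\, dt$. After integration by parts, one obtains the identity
\begin{equation*}
\mathrm{Bulk}_{\mathrm{lin}}(u) + \Big[\int_{\mathbb{H}^4} u_t\, \mathcal{M}u\, dV\Big]_{0}^{T} = \int_0^T\!\!\int_0^\infty \mathcal{N}(r,u)\, \mathcal{M}u\, \sinh^3 r\, dr\, dt,
\end{equation*}
in which the $u_t^2$ contributions cancel, leaving $\mathrm{Bulk}_{\mathrm{lin}}(u)$ as a spatial quadratic form in $(u_r,u)$ with coefficients determined algebraically by $h,k,U_\lambda$ and their derivatives. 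The temporal boundary term is controlled by $\|\vec{u}(t)\|_{H^1\times L^2(\mathbb{H}^4)}^{2}\lesssim \mathcal{E}$ via Cauchy--Schwarz, a Hardy inequality on $\mathbb{H}^4$ to handle the $1/r$ singularity of $k$ at the origin, Lemma~\ref{2d to 4d}, and the equivalence of $E(\vec{u})$ with $\|\vec{u}\|_{H^1\times L^2(\mathbb{H}^4)}^{2}$.

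Taylor expanding the nonlinearity,
\begin{equation*}
\mathcal{N}(r,u) = -\frac{\sinh 2P_\lambda}{\sinh r}\,u^2 - \frac{2\cosh 2P_\lambda}{3}\,u^3 + O\bigl(\sinh r\cdot u^4 + \sinh^2 r\cdot u^5\bigr),
\end{equation*}
the leading cubic piece paired with $\mathcal{M}u$ produces, after integration by parts in $r$, a quartic bulk on the RHS whose dominant piece is $-\tfrac{1}{2}h\,\cosh(2P_\lambda)\,\sinh^2 r\,\cosh r\cdot u^4$. Choosing $h\equiv 1$ (or any bounded positive nondecreasing weight) and moving this term to the LHS, the inequalities $\sinh^2 r\,\cosh r\geq \sinh^3 r$ and $\cosh(2P_\lambda)\geq 1$ yield the desired coercive quartic $\int u^4\sinh^3 r\,dr\,dt$. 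A direct calculation shows $\mathrm{Bulk}_{\mathrm{lin}}(u)$ also sits on the LHS with good sign: the $u_r^2$-coefficient reduces to $h'\sinh^3 r\geq 0$, while the $u^2$-coefficient, after a cancellation due to $-\Delta_{\mathbb{H}^4}-2$ sitting at the spectral bottom, reduces (for $h\equiv 1$) to $\tfrac{3}{4}\cosh r-\tfrac{1}{2}U_\lambda'\,\sinh^3 r$, which is nonnegative since $U_\lambda(r)=2\lambda^2\bigl(\tfrac{1-\tanh^2(r/2)}{1-\lambda^2\tanh^2(r/2)}\bigr)^{2}$ is monotonically decreasing in $r$.

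The main obstacle is to absorb the remaining, indefinite-sign contributions --- namely the subleading quartic piece $+\tfrac{1}{3}h\,\sinh(2P_\lambda)\,P_\lambda'\,\sinh^3 r\cdot u^4$ from the cubic nonlinearity, the $u^3$ bulk arising from pairing $\mathcal{N}_2=-\tfrac{\sinh 2P_\lambda}{\sinh r}u^2$ with $\mathcal{M}u$, and the higher-order $O(u^5\sinh^2 r)$ tails --- into the coercive quartic, the positive $u^2$ part of $\mathrm{Bulk}_{\mathrm{lin}}$, and the conserved energy. The absorption is done via Cauchy--Schwarz and Young's inequality with carefully chosen weights, but it requires a pointwise inequality relating the competing $r$-dependent coefficients. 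Using the explicit closed-form expressions obtained from $P_\lambda(r)=2\arctanh(\lambda\tanh(r/2))$, this reduces to an algebraic inequality in $(r,\lambda)$ that clearly holds for small $\lambda$ but may fail as $\lambda\to 1$. The restriction $\lambda\leq\Lambda$ is precisely what ensures the pointwise inequality holds, as alluded to in Remark~\ref{rem:Lmb}, and the quantitative threshold $\Lambda\geq 0.57716$ presumably emerges by optimizing over a parametrized family of weights $h(r)$ to maximize the admissible range of $\lambda$, possibly supplemented by a computer-aided verification.
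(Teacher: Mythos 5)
Your overall framework --- a radial Morawetz multiplier $Xu+\tfrac12(\operatorname{div}X)u$ applied to \eqref{4d eq}, with the linear bulk signed, the time-boundary term controlled by the energy, and the leading cubic part of $\calN$ producing the coercive quartic after integration by parts --- is structurally the same as the paper's argument (the paper works with $\varphi=\sinh r\, u$ and \eqref{perturbed eq} on $\bbH^2$, using the bounded multiplier $a_r=\tanh(r/2)$ with $\lap_{\bbH^2}a=1$, whereas your $h\equiv 1$ on $\bbH^4$ corresponds to the unbounded 2d weight $a_r\equiv 1$; this is a cosmetic difference). Your sign bookkeeping for the dominant pieces checks out: combining the $hu_r$ and $ku$ pairings of $-\tfrac{2}{3}\cosh(2P_\lmb)u^3$ does give $-\tfrac12\cosh(2P_\lmb)\sinh^2 r\cosh r\, u^4$ on the right, and $U_\lmb$ is indeed decreasing so the quadratic bulk is nonnegative.

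The genuine gap is that the entire content of the proposition --- the existence of a threshold $\Lmb\ge 1/2$ --- lives in the step you defer. You write that the indefinite terms (the $+\tfrac13\sinh(2P_\lmb)P'_\lmb\sinh^3 r\,u^4$ piece, the cubic bulk from $-\tfrac{\sinh 2P_\lmb}{\sinh r}u^2$, and the tails) are absorbed "via Cauchy--Schwarz and Young's inequality with carefully chosen weights," that this "reduces to an algebraic inequality in $(r,\lmb)$," and that the threshold "presumably emerges by optimizing... possibly supplemented by a computer-aided verification." None of this is exhibited or verified, and it is not routine: the cubic bulk has coefficient comparable to $\sinh(2P_\lmb)\sinh r\cosh r$, so the Young absorption requires (roughly) $\sinh^2(2P_\lmb)/\cosh(2P_\lmb)$ to fit under the product of the quartic and quadratic margins, and whether the resulting constraint admits all $\lmb\le 1/2$ depends on the exact numerical coefficients you have not computed. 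Moreover, your $O(\sinh r\, u^4+\sinh^2 r\, u^5)$ truncation of the analytic nonlinearity is only usable after one first proves a pointwise bound on $\varphi=\sinh r\,u$ from the finite energy, which you do not address. The paper avoids both issues: it never truncates, instead reducing $I[\varphi]$ to the exact pointwise inequality $\eqref{eq:mwtz:density4H2}\ge c_\lmb\abs{\varphi}^4$ in the full hyperbolic functions, proved by termwise Taylor-coefficient comparisons together with the single key estimate \eqref{eq:mwtz:H2:key}, $2\coth r\, a_r(1-\sinh P_\lmb/\cosh r)\ge 3/4$, which in the variable $s=\tanh(r/2)$ becomes the polynomial inequality \eqref{eq:final-ineq} and yields $\Lmb\ge 1/2$ by hand (and $\Lmb\ge 0.57716$ numerically, cf.\ Remark~\ref{rem:Lmb}). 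To complete your proof you would need to write down your analogue of \eqref{eq:final-ineq} explicitly and verify it on $[0,1/2]\times(0,1)$; as it stands the quantitative heart of the proposition is missing.
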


Unfortunately, our strategy of proof does not allow us to reach $\lmb = 1$, and limits the validity of Theorem~\ref{thm:main} to $0 \leq \lmb \leq \Lmb < 1$. The optimal value of $\Lmb$ arising from our proof, which was stated in Theorem~\ref{thm:main}, is $0.57716\cdots$; see Remark~\ref{rem:Lmb} for more discussion.

\begin{proof} [Proof of Proposition~\ref{prop:morawetz}]
 In order to prove Proposition~\ref{prop:morawetz}, we work with the variable $\varphi = \psi - P_{\lmb} = \sinh r \, u$ on $\bbH^{2}$. In terms of $\varphi$, \eqref{eq:morawetz} is equivalent to 
\begin{equation} \label{eq:morawetz-2d}
	\int_{I} \int_{0}^{\infty} \frac{\abs{\varphi}^{4}}{\sinh^{2} r} \sinh r \, \ud r \, \ud t \leq C \calE.
\end{equation}
 We introduce the radial multiplier\footnote{Note that this definition specifies $a$ only up to a constant, but this constant will not play any role.} $a : \bbH^{2} \to \bbR$ with
\begin{equation*}
	a_{r} = \frac{\cosh r - 1}{\sinh r} \,.
\end{equation*}
Its key properties are that $\lap_{\bbH^{2}} a =1,$ $a_{r}$ is bounded, and $a_{rr}\geq0$. In fact, we have
\begin{equation} \label{eq:est4ar}
	0 \leq a_{r} \leq 1, \quad 
	\frac{1}{2} \leq a_{r} \coth r \leq 1.
\end{equation}
Using the equation \eqref{perturbed eq} for $\varphi$, the following multiplier identity corresponding to $a_{r}$ can be derived:
\begin{align*}
\frac{\ud}{\ud t} \brk{\varphi_{t} \mid a_{r} \varphi_{r}} 
=&	- \frac{1}{2} \int_{0}^{\infty} (\varphi_{t}^{2} - \varphi_{r}^{2}) \sinh r \, \ud r
	- \int_{0}^{\infty} a_{rr} \varphi_{r}^{2} \sinh r \, \ud r \\
&	- \int_{0}^{\infty} a_{r} \varphi_{r} \frac{F(P_{\lmb}, \varphi)}{\sinh^{2} r}  \sinh r \, \ud r.
\end{align*}
On the other hand, we also have the identity
\begin{align*}
\frac{\ud}{\ud t} \brk{\varphi_{t} \mid \frac{\varphi}{2}}
= \frac{1}{2} \int_{0}^{\infty} (\varphi_{t}^{2} - \varphi_{r}^{2}) \sinh r \, \ud r 
- \frac{1}{2} \int_{0}^{\infty} \varphi  \frac{F(P_\lambda, \varphi)}{\sinh^{2} r} \sinh r \, \ud r.
\end{align*}
Adding the two preceding identities, we obtain 
\begin{align*}
\frac{\ud}{\ud t} \brk{\varphi_{t} \mid a_{r} \varphi_{r} + \frac{\varphi}{2}}
= 	& - \int_{0}^{\infty} a_{rr} \varphi_{r}^{2} \sinh r \, \ud r
	- \frac{1}{2} \int_{0}^{\infty} \varphi  \frac{F(P_\lambda, \varphi)}{\sinh^{2} r} \sinh r \, \ud r \\
	& -  \int_{0}^{\infty} a_{r} \varphi_{r} \frac{F(P_\lambda, \varphi)}{\sinh^{2} r}  \sinh r \, \ud r.
\end{align*}
For the last integral, we furthermore plug in the formula
\begin{align*}
	\rd_{r} \varphi F(P_{\lmb}, \varphi)
=& \frac{1}{2} \bb( \frac{1}{2} \cosh(2P_{\lmb}) \rd_{r} (\cosh(2 \varphi) - 1)  + \frac{1}{2} \sinh(2P_{\lmb}) \rd_{r} (\sinh(2 \varphi) - 2\varphi) \bb),
\end{align*}
and integrate by parts in $r$. As a result, we obtain 
\begin{equation} \label{eq:mwtz:general}
\begin{aligned}
\frac{\ud}{\ud t} \brk{\varphi_{t} \mid a_{r} \varphi_{r} + \frac{\varphi}{2}}
=& 	- \int_{0}^{\infty} a_{rr} \varphi_{r}^{2} \sinh r \, \ud r - \frac{1}{4} I[\varphi]	
\end{aligned}
\end{equation}
where $I[\varphi]$ is given by
\begin{equation*} 
\begin{aligned}
I[\varphi] := &	 \int_{0}^{\infty}  \bb( \cosh(2P_{\lmb}) \sinh(2 \varphi) \varphi + \sinh(2P_{\lmb}) (\cosh(2 \varphi) -1) \varphi \bb) \frac{1}{\sinh r}  \, \ud r \\
&	- \int_{0}^{\infty} \bb( \cosh (2 P_{\lmb}) (\cosh(2 \varphi) -1) \varphi + \sinh (2P_{\lmb}) (\sinh (2 \varphi) - 2\varphi) \bb) \frac{1}{\sinh r} \, \ud r \\
&	+  \int_{0}^{\infty} \bb( \cosh(2P_{\lmb}) (\cosh(2 \varphi) -1) + \sinh(2P_{\lmb}) ( \sinh(2 \varphi) - 2 \varphi) \bb)  \frac{2\coth r a_{r} }{\sinh r} \, \ud r \\
&	- \int_{0}^{\infty} \bb( \sinh(2P_{\lmb}) \frac{\sinh P_{\lmb}}{\cosh r} (\cosh(2 \varphi) -1)  \bb) 2 \coth r a_{r} \frac{1}{\sinh r} \, \ud r \\
&	- \int_{0}^{\infty} \bb( \cosh (2P_{\lmb}) \frac{\sinh P_{\lmb}}{\cosh r} (\sinh (2 \varphi) - 2 \varphi) \bb) 2 \coth r a_{r} \frac{1}{\sinh r} \, \ud r .
\end{aligned}
\end{equation*}

Upon integration of \eqref{eq:mwtz:general} in $t$ over any time interval $I$, the contribution of the LHS can be uniformly bounded by the conserved energy for the full wave map $\psi$. The first term on the RHS has a sign, so can be thrown away. Hence the desired estimate \eqref{eq:morawetz} would follow once we prove
\begin{equation} \label{eq:mwtz:H2:mid-goal}
	I[\varphi] \geq c_{\lmb} \int_{0}^{\infty} \frac{\abs{\varphi}^{4}}{\sinh^{2} r} \sinh r \, \ud r
\end{equation}
for some $c_{\lmb} > 0$.  We will establish this inequality for $0 \leq \lmb \leq \Lmb$, where $\Lmb > 0$ is an explicit constant which can be shown to be $\Lmb \geq 0.56$. We remark that $c_{\lmb}$ is given by \eqref{eq:mwtz:c lmb}.

In order to proceed, we reduce the problem further to verifying a pointwise bound. Consider the expression
\begin{align}
&	 \sinh(2 \varphi) \varphi + \tanh (2P_{\lmb}) (\cosh(2 \varphi) -1) \varphi  \notag \\
&	- (\cosh(2 \varphi) -1) - \tanh (2P_{\lmb}) (\sinh (2 \varphi) - 2\varphi) \label{eq:mwtz:density4H2}\\
&	+  \bb( (\cosh(2 \varphi) -1) + \tanh(2P_{\lmb}) (\sinh(2 \varphi) - 2 \varphi) \bb) 2 \coth r a_{r} \notag \\
&	- \bb(  \tanh(2P_{\lmb}) \frac{\sinh P_{\lmb}}{\cosh r} (\cosh(2 \varphi) -1) +  \frac{\sinh P_{\lmb}}{\cosh r} (\sinh (2 \varphi) - 2 \varphi) \bb) 2 \coth r a_{r} , \notag
\end{align}
which is the integrand in the definition of $I[\varphi]$ divided by $\frac{\cosh (2P_{\lmb})}{\sinh r}$. Note that $\cosh (2 P_{\lmb}) \geq 1$ for all $\lmb \in [0, 1)$ and $r \in (0, \infty)$. Therefore, \eqref{eq:mwtz:H2:mid-goal} reduces to
\begin{equation} \label{eq:mwtz:H2:goal}
	\eqref{eq:mwtz:density4H2} \geq c_{\lmb} \abs{\varphi}^{4}.
\end{equation}

The proof of \eqref{eq:mwtz:H2:goal} is split into two cases: when $\varphi \geq 0$ and $\varphi < 0$.

\vskip.5em
\noindent {\bf Case 1: $\varphi \geq 0$.}
In this case, we can treat all $\lmb \in [0, 1)$. Here, it suffices to know the following trivial bound for $P_{\lmb}$:
\begin{equation*}
	0 \leq P_{\lmb}(r) = 2 \arctanh (\lmb \tanh(\frac{r}{2}) ) \leq r.
\end{equation*}

The desired claim follows from the four inequalities below:
\begin{align} 
	\sinh (2 \varphi) \varphi - \cosh (2 \varphi) + 1  \geq \frac{2}{3} \varphi^{4} , \label{eq:mwtz:H2:pos1}\\
	\tanh(2P_{\lmb}) \bb( (\cosh (2\varphi) - 1) \varphi - ( \sinh (2 \varphi) - 2 \varphi) \bb) \geq 0 , \label{eq:mwtz:H2:pos2}\\
	2 \coth r a_{r} \bb( 1  - \tanh (2 P_{\lmb}) \frac{\sinh P_{\lmb}}{\cosh r} \bb) (\cosh (2 \varphi) - 1)   \geq 0 ,\label{eq:mwtz:H2:pos3}\\
	2 \coth r a_{r} \bb( \tanh(2 P_{\lmb}) - \frac{\sinh P_{\lmb}}{\cosh r} \bb) ( \sinh (2 \varphi) - 2 \varphi ) \geq 0. \label{eq:mwtz:H2:pos4}
\end{align}

We now turn to the proofs of these inequalities.

\vskip.3em
\noindent{\it Case 1.1: Proof of \eqref{eq:mwtz:H2:pos1}.}
This inequality is an easy consequence of Taylor expansion:
\begin{align*}
	\sinh (2 \varphi) \varphi - \cosh(2 \varphi) + 1
= & \sum_{k=0}^{\infty} \bb( \frac{1}{2 (2k+1)!}  - \frac{1}{(2k+2)!} \bb) (2\varphi)^{2k+2} \\
\geq & \frac{2}{3} \varphi^{4} + \sum_{k=2}^{\infty} \bb( \frac{1}{(2k+1)!} \frac{k}{2k+2} \bb) (2\varphi)^{2k + 2}
\geq \frac{2}{3} \varphi^{4}.
\end{align*}

\vskip.3em
\noindent{\it Case 1.2: Proof of \eqref{eq:mwtz:H2:pos2}.}
By Taylor expansion, we have
\begin{align*}
\frac{1}{2} (\cosh 2\varphi - 1) 2\varphi - ( \sinh (2 \varphi) - 2 \varphi) 
=& \sum_{k=1}^{\infty} \bb( \frac{1}{2 (2k)!} - \frac{1}{(2k+1)!}\bb)(2 \varphi)^{2k+1} \geq 0.
\end{align*}
Then \eqref{eq:mwtz:H2:pos2} follows immediately.

\vskip.3em
\noindent{\it Case 1.3: Proof of \eqref{eq:mwtz:H2:pos3}.}
It suffices to prove
\begin{align*}
	1  - \tanh (2 P_{\lmb}) \frac{\sinh P_{\lmb}}{\cosh r} \geq 0,
\end{align*}
which is obvious since $0 \leq \tanh (2 P_{\lmb}) \leq 1$, $0 \leq \frac{\sinh P_{\lmb}}{\cosh r} \leq \tanh r \leq 1$.

\vskip.3em
\noindent{\it Case 1.4: Proof of \eqref{eq:mwtz:H2:pos4}.}
Note that $\sinh (2 \varphi) - 2\varphi \geq 0$, since $\varphi \geq 0$. Therefore, it suffices to prove
\begin{equation*}
\tanh(2 P_{\lmb}) - \frac{\sinh P_{\lmb}}{\cosh r} \geq 0
\end{equation*}
which follows from
\begin{equation*}
\tanh(2 P_{\lmb}) \geq \tanh P_{\lmb} = \frac{\sinh P_{\lmb}}{\cosh P_{\lmb}} \geq \frac{\sinh P_{\lmb}}{\cosh r}.
\end{equation*}

\vskip.5em
\noindent{\bf Case 2: $\varphi < 0$.} In this case, we can only treat $\lmb$ which are not too large. For convenience, we define $\phi := - \varphi$, so that $\phi > 0$. Then \eqref{eq:mwtz:density4H2} becomes
\begin{equation} \label{eq:mwtz:density4H2:neg} \tag{\ref{eq:mwtz:density4H2}$'$}
\begin{aligned}
&	\sinh(2 \phi) \phi - \tanh (2P_{\lmb}) (\cosh (2 \phi) - 1) \phi  \\
&	- (\cosh (2 \phi) - 1) + \tanh (2P_{\lmb}) (\sinh (2 \phi) - 2\phi) \\
&	+  \bb( (\cosh (2 \phi) - 1) - \tanh(2P_{\lmb}) (\sinh(2 \phi) - 2 \phi) \bb) 2 \coth r a_{r} \\
&	- \bb(  \tanh(2P_{\lmb}) \frac{\sinh P_{\lmb}}{\cosh r} (\cosh (2 \phi) - 1) -  \frac{\sinh P_{\lmb}}{\cosh r} (\sinh (2 \phi) - 2 \phi) \bb) 2 \coth r a_{r} 
\end{aligned}
\end{equation}
We now separate each term that does not have a factor of $\tanh (2 P_{\lmb})$ into $(1- \tanh (2 P_{\lmb})) + \tanh(2 P_{\lmb})$, and collect all terms with a factor of $\tanh (2 P_{\lmb})$. The desired claim is now an easy consequence of the following three inequalities:
\begin{align} 
	& (1-\tanh (2P_{\lmb}) ) \bb( \sinh (2 \phi) \phi - (\cosh (2 \phi) - 1) \bb) \geq c_{\lmb} \phi^{4}, \label{eq:mwtz:H2:neg1} \\
	 & 2 \coth r a_{r} (1- \tanh(2 P_{\lmb})) \bb( (\cosh (2 \phi) - 1) + \frac{\sinh P_{\lmb}}{\cosh r} ( \sinh (2 \phi) - 2\phi ) \bb) \geq 0, \label{eq:mwtz:H2:neg2} \\
	& \tanh(2 P_{\lmb}) \bb[ \bb( \sinh(2 \phi) \phi - (\cosh (2 \phi) - 1) \phi - (\cosh(2 \phi) - 1) + (\sinh(2\phi) - 2\phi) \bb) \quad \phantom{\geq} \notag \\
	& + 2 \coth r a_{r}  \bb( 1 - \frac{\sinh P_{\lmb}}{\cosh r} \bb) \bb( (\cosh (2 \phi) - 1) - (\sinh (2\phi) - 2 \phi) \bb) \bb] \geq 0. \label{eq:mwtz:H2:neg3}
\end{align}
where 
\begin{equation} \label{eq:mwtz:c lmb}
	c_{\lmb} = \frac{2}{3} (1-\tanh (4 \arctanh \lmb)).
\end{equation}

\vskip.3em
\noindent{\it Case 2.1: Proof of \eqref{eq:mwtz:H2:neg1}.}
This inequality follows from \eqref{eq:mwtz:H2:pos1}.

\vskip.3em
\noindent{\it Case 2.2: Proof of \eqref{eq:mwtz:H2:neg2}.}
This inequality is obvious for $\phi \geq 0$.

\vskip.3em
\noindent{\it Case 2.3: Proof of \eqref{eq:mwtz:H2:neg3}.}
We claim that \eqref{eq:mwtz:H2:neg3} holds if
\begin{equation} \label{eq:mwtz:H2:key}
	2 \coth r a_{r}  \bb( 1 - \frac{\sinh P_{\lmb}}{\cosh r} \bb) \geq \frac{3}{4}.
\end{equation}
Indeed, suppose that \eqref{eq:mwtz:H2:key} holds. Since $\tanh (2 P_{\lmb}) \geq 0$, it suffices to show that the expression inside the square brackets is non-negative. Using \eqref{eq:mwtz:H2:key}, this expression can be bounded from below by
\begin{align*}
	\geq & \bb( \sinh(2 \phi) \phi - (\cosh (2 \phi) - 1) \phi - (\cosh(2 \phi) - 1) + (\sinh(2\phi) - 2\phi) \bb) \\
	& + \frac{3}{4} \bb( (\cosh (2 \phi) - 1) - (\sinh (2\phi) - 2 \phi) \bb) \\
	= & - (e^{-2 \phi} - 1) \phi - \frac{1}{4} (e^{- 2 \phi} + 2 \phi - 1)
\end{align*}
Note that the last expression equals $0$ when $\phi = 0$, and its $\phi$-derivative equals $\frac{1}{2} (1 - e^{-2 \phi}) + 2 \phi e^{-2 \phi}$, which is manifestly non-negative for $\phi \geq 0$. Therefore, \eqref{eq:mwtz:H2:neg3} follows by integration.

It now remains to find $\Lmb$ such that \eqref{eq:mwtz:H2:key} holds in the range $\lmb \in [0, \Lmb]$. Observe that this inequality obviously holds for sufficiently small $\lmb \geq 0$, since $2 \coth r a_{r} \geq \frac{1}{2}$ and the term $\frac{\sinh P_{\lmb}}{\cosh r}$ converges uniformly to $0$ as $\lmb \to 0$. To simply the expression, we employ the algebraic trick\footnote{In fact, geometrically, this trick amounts to working with the Poincar\'e disk model for the domain.} of working with a new variable
\begin{equation*}
	s := \tanh (r/2).
\end{equation*}

Then we have the following formulae:
\begin{align*}
	\cosh r = \frac{1 + s^{2}}{1 - s^{2}}, \quad
	\sinh P_{\lmb} = \frac{2 \lmb s}{1-\lmb^{2} s^{2}}, \quad
	2 a_{r} \coth r = (1+s^{2})
\end{align*}
and \eqref{eq:mwtz:H2:key} becomes
\begin{align*}
	(1+s^{2}) \bb( 1 - \frac{2 \lmb s}{1 - \lmb^{2} s^{2}} \frac{1-s^{2}}{1+s^{2}} \bb) 
	\geq \frac{3}{4}
\end{align*}
or equivalently,
\begin{equation} \label{eq:final-ineq}
\bb( \frac{1}{4}+s^{2} \bb)(1-\lmb^{2} s^{2}) - 2 \lmb s (1-s^{2}) 
	\geq 0
\end{equation}
for $s \in (0, 1)$. Using calculus, it may be easily verified that $\rd_{\lmb}$ of the LHS of \eqref{eq:final-ineq} is non-positive for $s, \lmb \in (0, 1)$. Hence if \eqref{eq:final-ineq} holds for some $\lmb_{0} \in (0, 1)$, then it holds for all $0 \leq \lmb \leq \lmb_{0}$. Let $\Lmb$ be the supremum of such $\lmb_{0}$. It is easy to see that $\Lmb \geq 1/2$, since the LHS of \eqref{eq:final-ineq} in the case $\lmb = 1/2$ becomes
\begin{equation*}
\bb( \frac{1}{4}+s^{2} \bb)\bb( 1- \frac{ s^{2}}{4} \bb) - s (1-s^{2}) 
\geq 	\bb( \frac{1}{4} - s +s^{2} \bb)(1-s^{2}) \geq 0
\end{equation*}
for $s \in (0, 1)$. 
\end{proof}

\begin{rem} \label{rem:Lmb}
Unfortunately, our naive strategy of proving a pointwise bound does not reach $\lmb = 1$. Indeed, $(s, \lmb) = (1/2, 3/4)$ violates \eqref{eq:final-ineq}, which implies that $\Lmb < 3/4$. 
With the help of a computer algebra system, it may be checked that $\Lmb = 0.57716\cdots$.
\end{rem}

\subsection{Rigidity and conclusion of proof of Theorem~\ref{thm:main}}
With Proposition~\ref{prop:morawetz}, we are ready to perform the rigidity step of the Kenig-Merle approach, and conclude the proof of Theorem~\ref{thm:main}.
\begin{proof}[Proof of Theorem~\ref{thm:main}]
Fix $\la\leq\Lambda$ where $\Lambda$ is as in Proposition~\ref{prop:morawetz}. If the theorem fails, then by Proposition~\ref{prop:crit-elt} we can find a critical element $\vec{u}_{\ast}$ such that $\vec{u}_{\ast} \not \equiv (0, 0)$ and the forward trajectory $K_{+}$ is pre-compact in $H^{1} \times L^{2}$.  Let $\vec{\psi}_{\ast} = \sinh r \vec{u}_{\ast} + (P_{\lmb}, 0)$ be the corresponding equivariant wave map with energy $\calE$.
We will argue as in the proof of Proposition 8.4 in \cite{LOS2} to show that $\vec{u}_{\ast}(t)\equiv (0,0),$ which is a contradiction. 

All the constants in the proof may depend on the energy $\E$ of the wave map $\vec{\psi}_{\ast}$. 
Let $K_M$ be a radial kernel supported on the ball $\{r\leq\frac{1}{M}\}$ in $\Hp^4$, uniformly bounded for $M\geq1$, and satisfying $\int_{\Hp^4}K_M d\textrm{Vol}_{\Hp^4}=1$. Define the approximation of the identity $Q_M f=K_M\ast f$. To define the convolution, we use the group structure coming from the representation of the hyperbolic space as a symmetric space. See \cite{Bray} for more details. 

Fix $\epsilon>0.$ Because $K_{+}$ is pre-compact, if $M$ is sufficiently large then
\ali{\label{small Strich 1}
\sup_{t \in [0, \infty)}\|(1-Q_M)u_{\ast}(t)\|_{L^4(\Hp^4)}\leq\frac{\epsilon}{2}.
}
On the other hand, by Young's inequality and uniform boundedness of $\nrm{\vec{u}_{\ast}(t)}_{H^{1} \times L^{2}}$, we have
$
\sup_{t \in [0, \infty)}\|Q_M u_{\ast}(t)\|_{L^\infty(\Hp^4)}\aleq_{M} 1.
$
Moreover, by Young's inequality and Proposition~\ref{prop:morawetz}, we also have
$
\|Q_M u_{\ast}\|_{L_{t,x}^4([0, \infty)\times\Hp^4)}\aleq 1.
$
By interpolation, we get
$
\|Q_M u_{\ast} \|_{L_{t,x}^5([0, \infty) \times\Hp^4)}\aleq_{M} 1.
$
Hence for $T > 0$ sufficiently large, we have
\ali{\label{small Strich 2}
\|Q_M u_{\ast}\|_{L_{t,x}^5([T,\infty)\times\Hp^4)}\leq\frac{\epsilon}{2}.
}
Our task now is to show that \eqref{small Strich 1} and \eqref{small Strich 2} together imply that $\vec{u}_{\ast}$ scatters forward in time, which is impossible by the pre-compactness of $K_{+}$ unless $\vec{u}_{\ast} \equiv 0$. 
We proceed by a minor variant of the proof of Proposition~\ref{prop:small-data} given in \cite[Proof of Proposition~5.3]{LOS1}.

On any interval $I$, we introduce the strengthened scattering norm 
\begin{equation*}
	\nrm{v}_{\tilde{S}(I)} 
	= 
	\nrm{v}_{L_{t}^{3} (I; L_{x}^{6}(\bbH^{4}))}
	+ \nrm{v}_{L_{t}^{\frac{5}{2}} (I; L_{x}^{\frac{20}{3}}(\bbH^{4}))}
	+ \nrm{v}_{L_{t}^{2} (I; L_{x}^{8}(\bbH^{4}))}.
\end{equation*}
By Duhamel's principle applied to \eqref{4d eq} and Strichartz estimates for \eqref{4d lin eq} (cf. Proposition 4.2 in \cite{LOS1}), for any interval $I \subseteq [T, \infty)$ we have 
\ant{
\|u_{\ast}\|_{\tilde{S}(I)}\lesssim 1+\|\NN(r,u_{\ast})\|_{N(I)},
}
where the implicit constant is independent of $I$ and $N(I)$ is defined in \eqref{eq:N-norm}. The nonlinearity $\NN$ satisfies the estimate (cf. \cite[Lemma~5.2]{LOS1})
\ant{
|\NN(r,u)|\lesssim e^{-r}|u|^2+|u|^3.
} 
We will estimate the $N(I)$ norms of the two terms on the RHS of this inequality separately. We simplify notation by writing $L_t^pL_x^q$ for $L_t^p(I; L_x^q(\Hp^4)).$ For the cubic term we have
\ant{
\||u_{\ast}|^3\|_{L_t^1L_x^2}&\leq\||u_{\ast}|^2 \abs{Q_M u_{\ast}} \|_{L_t^1L_x^2}+\||u_{\ast}|^2 \abs{(1-Q_M) u_{\ast}}\|_{L_t^1L_x^2}\\
&\leq \|u_{\ast}\|_{L_t^{5/2}L_x^{20/3}}^2\|Q_M u_{\ast}\|_{L_{t,x}^5}+\|u_{\ast}\|^2_{L_t^2L_x^8}\|(1-Q_M) u_{\ast}\|_{L_t^\infty L_x^4}\\
&\aleq \epsilon \|u_{\ast}\|^2_{\tilde{S}(I)},
}
by \eqref{small Strich 1} and \eqref{small Strich 2}. Similarly
\ant{
\|e^{-r}|u_{\ast}|^2\|_{L_t^{3/2}L_x^{12/7}}\leq\||u_{\ast}|^3\|_{L_t^1L_x^2}^{2/3}\|e^{-r}\|_{L_t^\infty L_x^4}\lesssim\epsilon^{2/3}\|u_{\ast}\|_{\tilde{S}(I)}^{4/3},
}
by the previous estimate. In sum, we have shown that
\begin{equation*}
	\nrm{u_{\ast}}_{\tilde{S}(I)} \aleq 1 + \eps \nrm{u_{\ast}}_{\tilde{S}(I)}^{2} + \eps^{2/3} \nrm{u_{\ast}}_{\tilde{S}(I)}^{4/3}
\end{equation*}
for every interval $I \subseteq [T, \infty)$. If $\epsilon$ is chosen sufficiently small, we deduce via a continuity argument that $\|u_{\ast}\|_{\tilde{S}[T,\infty)}<\infty$. By pre-compactness of $K_{+}$, it follows that $\vec{u}_{\ast} \equiv 0$, which is a contradiction. \qedhere 
\end{proof}

\bibliographystyle{plain}
\bibliography{researchbib}

 \bigskip
\bigskip

\centerline{\scshape Andrew Lawrie, Sung-Jin Oh}
\smallskip
{\footnotesize
 \centerline{Department of Mathematics, The University of California, Berkeley}
\centerline{970 Evans Hall \#3840, Berkeley, CA 94720, U.S.A.}
\centerline{\email{ alawrie@math.berkeley.edu, sjoh@math.berkeley.edu}}
} 

 \medskip

\centerline{\scshape Sohrab Shahshahani}
\medskip
{\footnotesize
 \centerline{Department of Mathematics, The University of Michigan}
\centerline{2074 East Hall, 530 Church Street
Ann Arbor, MI  48109-1043, U.S.A.}
\centerline{\email{shahshah@umich.edu}}
} 

\end{document}